\setlist[itemize,1]{leftmargin=\dimexpr 22pt}
\newcommand{\Tini}{{T_{\textup{ini}}}}
\newcommand{\Tf}{{T_{\textup{f}}}}
\newcommand{\uini}{{u_{\textup{ini}}}}
\newcommand{\yini}{{y_{\textup{ini}}}}
\def\@IEEEsectpunct{.\ \,}
\def\paragraph{\@startsection{paragraph}{4}{\z@}{1.5ex plus 1.5ex minus 0.5ex}%
{0ex}{\normalfont\normalsize\itshape}}
\declaretheorem[style=definition,qed=$\lrcorner$]{theorem}
\declaretheorem[style=definition,qed=$\lrcorner$]{corollary}
\declaretheorem[style=definition,qed=$\lrcorner$]{proposition}
\declaretheorem[style=definition,qed=$\lrcorner$]{lemma}
\declaretheorem[style=definition,qed=$\vartriangle$]{remark}
\declaretheorem[style=definition,numbered=no,qed=$\lrcorner$]{standing assumption}
\declaretheorem[style=definition]{definition}
\renewcommand\thmcontinues[1]{continued}
\newcommand {\nn}{\nonumber}
\newcommand{\beq}{\begin{equation}}
\newcommand{\eeq}{\end{equation}}
\newcommand {\bseq}{\begin{subequations}}
\newcommand {\eseq}{\end{subequations}}
\newcommand {\bma}{\left[}
\newcommand {\ema}{\right]}
\newcommand {\N}{\mathbb{N}} 	
\newcommand {\Zge}{\mathbb{Z}_{+}} 	
\newcommand {\R}{\mathbb{R}} 	
\newcommand {\Co}{\mathbb{C}} 	
\newcommand {\B}{\mathcal{B}} 	
\renewcommand {\L}{\mathcal{L}} 	
\newcommand{\closure}{\operatorname{cl}} 
\newcommand{\Image}{\operatorname{im}} 
\newcommand{\rank}{\operatorname{rank}} 
\newcommand{\transpose}{\mathsf{T}} 
\newcommand{\blockdiag}{\operatorname{block-diag}} 
\newcommand{\norm}[1]{\left\lVert#1\right\rVert}
\newcommand{\diag}{\operatorname{diag}}
\newcommand{\trace}{\operatorname{trace}}
\newcommand\Grass[2]{\operatorname{Gr}(#1,  #2) }
\newcommand\Ort[1]{\operatorname{O}(#1) }
\title{\Large {\bfseries  Behavioral uncertainty quantification for data-driven control}}
\author{Alberto Padoan,  Jeremy Coulson,   Henk J. van Waarde,  John Lygeros, and Florian D\"orfler\thanks{
		A.  Padoan,  J.  Coulson,  J.  Lygeros,  F.  D\"orfler are with 
		the Department of Information Technology and Electrical Engineering at		
		ETH Z\"urich,  Z\"urich, Switzerland 
		{\tt\footnotesize  \{apadoan,  jcoulson, lygeros, dorfler\}@control.ee.ethz.ch}. 
		H.  J.  van Waarde 
		is with the Bernoulli Institute for Mathematics,  Computer Science and Artificial Intelligence,  University of Groningen,  Groningen, The Netherlands.
		{\tt\footnotesize  h.j.van.waarde@rug.nl}. Research supported by the Swiss National Science Foundation under the NCCR Automation.
						}
}
\date{\small\today} 
\begin{document}

\maketitle
\thispagestyle{plain}
\pagestyle{plain}

\begin{abstract} 
\noindent
This paper explores the problem of uncertainty quantification in the behavioral setting for data-driven control. Building on classical ideas from robust control, the problem is regarded as that of selecting a metric which is best suited to a data-based description of uncertainties. Leveraging on Willems' fundamental lemma, restricted behaviors are viewed as subspaces of fixed dimension, which may be represented by data matrices. Consequently,  metrics between restricted behaviors are defined as distances between points on the Grassmannian, i.e., the set of all subspaces of equal dimension in a given vector space. A new metric is defined on the set of restricted behaviors as a direct finite-time counterpart of the classical gap metric. The metric is shown to capture parametric uncertainty for the class of autoregressive (AR) models. Numerical simulations illustrate the value of the metric with a data-driven  mode recognition  and control case study. 
\end{abstract}

\section{Introduction}

In a typical control design problem,  the role of data (time series) has been long dictated by \textit{indirect approaches}~\cite{ljung1999system,VanOverschee1996subspace},  where system identification is sequentially followed by model-based control.  However,  the advent of large data sets  and the ever-increasing computing power,     combined 
with the ongoing revolution brought about by machine learning technologies,  has recently triggered a renewed appreciation for \textit{direct approaches},  where the objective is to infer optimal decisions directly from measured data.   

A cornerstone of this newly emerging trend in control is a far-reaching result due to Willems and co-authors~\cite{willems2005note},  commonly known as the  \textit{fundamental lemma}.  
Leveraging on  the behavioral 
approach to 
system   
theory~\cite{willems1986time,willems1997introduction},
the fundamental lemma establishes that parametric models of a data-generating  linear time-invariant (LTI) system  
may be replaced by  raw data, 
provided the dynamics are sufficiently excited.  
Following the contributions~\cite{coulson2019data,de2019formulas,vanWaarde2020},  the number of new data-driven control algorithms has boomed over the past few years,  see, e.g.,~\cite{markovsky2021behavioral} for a recent overview.  A convincing demonstration of the potential of 
 direct approaches to 
data-driven control   
is the successful implementation of the DeePC algorithm~\cite{coulson2019data} in a wide range of experimental case studies,  including aerial robotics~\cite{coulson2021distributionally},   synchronous motor drives~\cite{carlet2020data},  grid-connected power converters~\cite{huang2019data}.

The new wave of data-driven control algorithms has primarily modeled uncertainty by ellipsoids ~\cite{coulson2021distributionally,berberich2020robust,vanWaarde2022,xue2021data,bisoffi2021data,huang2021robust}.
While effective in many circumstances,  this approach disregards the geometric structure of the data,  leading to a 
possibly coarse characterization of uncertainty.

 This   paper explores the problem  of   uncertainty quantification in data-driven control  of LTI systems. 
We seek a data-based behavioral description  of uncertainty.    
Building on the rich legacy of robust control theory~\cite{zames1981uncertainty,partington2004linear,zhou1996robust,vinnicombe2001uncertainty}, 
we identify the problem of uncertainty quantification with that of selecting a  ``natural'' metric to  study  robustness questions.      
The starting point of our analysis is a seemingly elementary,  yet profound consequence of the fundamental lemma:  restricted behaviors may be regarded as subspaces of fixed dimension and represented directly by data matrices.
Building on this premise,  we identify restricted behaviors with points on the \textit{Grassmannian} $\Grass{k}{N}$,  \textit{i.e.}, the set of all subspaces of dimension 
$k$ in $\R^N$,  endowed with the structure of  a  (quotient) manifold.   The $L$-gap metric is then introduced as a direct finite-time counterpart of the classical gap metric~\cite{zames1981uncertainty,partington2004linear,zhou1996robust,vinnicombe2001uncertainty},  
which measures the distance between graphs of input-output operators and allows one to compare the closed-loop behavior of different systems subject to 
the same feedback controller.

\textbf{Contributions}: 
The contributions of the paper are fourfold:
(i) we define a new (representation free) metric on the set of restricted behaviors; we show that the metric is easily computed via measured data and readily understood in terms of trajectories;
(ii) we show the our metric can be used for uncertainty quantification for behaviors described by AR models;
(iii) we connect the $L$-gap to the classical $\ell_2$-gap from robust control theory; and
(iv) we demonstrate the benefits brought by the $L$-gap in a data-driven mode recognition and control case study.

\textbf{Paper organization}: The remainder of this paper is organized as follows.
Section~\ref{sec:preliminaries} provides basic definitions regarding behavioral systems.
Section~\ref{sec:main_results}  introduces a new metric between restricted behaviors, 
which is then used for uncertainty quantification purposes
and shown to be closely connected to the classical $\ell_2$-gap.   
Section~\ref{sec:example} illustrates the theory with a numerical case study.
Section~\ref{sec:conclusion} provides a summary of the main results and an outlook to future research directions.

\textbf{Notation}:  
The set of positive and non-negative integers are denoted by $\N$ and $\Zge$, respectively.   
The set of positive integers $\{1, \dots , p\}$ is denoted by $\mathbf{p}$ for all ${p\in\N}$. 
The set of real numbers is denoted by $\R$.   
The transpose, image, and kernel of the matrix ${M \in \R^{p \times m}}$ are denoted by $M^{\transpose}$, $\Image M$, and $\ker M$, respectively.  
A map $f$ from $X$ to $Y$ is denoted by $f:X \to Y$; $(Y)^{X}$ denotes the set of all such maps. 
The \textit{$t$-shift} is defined as $(\sigma^t f)(t^{\prime}) = f(t+t^{\prime})$ for all ${t,t^{\prime}\in\Zge}$.  

\section{Behavioral systems}  \label{sec:preliminaries}

\subsection{Preliminaries in behavioral system theory}

Following~\cite{willems1997introduction},  we introduce some basic notions and results on behavioral systems.

\begin{definition}
A \textit{dynamical system} $\Sigma$ is a triple 
$\Sigma=(\Zge,\R^q,\B),$
where $\Zge$ is the \textit{time set}, $\R^q$ is the \textit{signal space}, and $\B \subseteq (\R^q)^{\Zge}$ is  the \textit{behavior} of the system.  
\end{definition}
\begin{definition}
A dynamical system $\Sigma=(\Zge,\R^q,\B)$ is \textit{linear} if $\B$ is a linear subspace of $(\R^q)^{\Zge}$, \textit{time invariant} if ${\sigma^t(\B) \subseteq \B}$ for all ${t \in \Zge}$, and \textit{complete} if ${\B}$ is closed in the topology of pointwise convergence.
\end{definition}
\noindent 

The structure of an LTI dynamical system is characterized by a set of integer invariants 
known as \textit{structure indices}~\cite[Section 7]{willems1986time}.
The most important ones are the \textit{number of inputs (or free variables)} $m$,  the \textit{lag} $l$, and the \textit{order} $n$.
The structure indices are intrinsic properties of a dynamical system,  as they do not depend  on  its representation.  The \textit{complexity} of a dynamical system is defined as  $c= (m,l,n)$.  The class of all complete linear,  time invariant systems (with complexity $c$)  is denoted by $\L^q$ ($\L^{q,c}$).  By a convenient abuse of notation, we shall also write $\B \in \L^{q}$ ($\B \in \L^{q,c}$).

\begin{definition}
Let $\B \in \mathcal{L}^{q}$ and  ${T\in \N}$. 
The \textit{restricted behavior (in the interval $[1,T]$)} is the set
$\B|_{T}  =\{w=\text{col}(w_1, \ldots,w_T) \in \R^{qT} \, | \, \exists \, v \in \B  \, : \, w_t = v_t, \, \forall \, t\in \mathbf{T} \}.$ 
A vector ${w \in \B|_{T}}$ is a \textit{$T$-length trajectory of the dynamical system $\B$}.
\end{definition}

The following lemma characterizes the dimension of a restricted behavior ${\B|_{L} \in \L^{q,c}}$ in terms of its complexity.

\begin{lemma}\cite[Lemma 2.1]{dorfler2022bridging} \label{lemma:subspace}
Let  ${\B \in \L^{q,c}}$. Then $\B|_{L}$ is a subspace of $\R^{qL}$,  the dimension of which is
 ${\dim \B|_{L}  =  m L+ n, }$ for ${L > l}$.  
\end{lemma}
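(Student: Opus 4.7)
The plan is to prove the claim in two stages: first that $\B|_L$ is a linear subspace of $\R^{qL}$, and then to compute its dimension via a minimal input/state/output (i/s/o) representation of $\B$.

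For the subspace claim, $\B$ is by assumption a linear subspace of $(\R^q)^{\Zge}$, and the restriction map $\rho_L:(\R^q)^{\Zge}\to\R^{qL}$ sending a signal to its first $L$ samples is linear. Since $\B|_L = \rho_L(\B)$, it is the image of a subspace under a linear map, and hence itself a subspace of $\R^{qL}$.

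For the dimension, I would invoke the standard fact that every $\B\in\L^{q,c}$ admits a minimal i/s/o representation: up to a permutation of components, $w=\col(u,y)$ with $u\in\R^m$, $y\in\R^p$ and $p=q-m$, and there exist matrices $A\in\R^{n\times n}$, $B$, $C$, $D$ with $(C,A)$ observable of observability index equal to the lag $l$, such that the signals in $\B$ are exactly those generated by $x(t+1)=Ax(t)+Bu(t)$, $y(t)=Cx(t)+Du(t)$ for some initial state $x_0$ and input $u$. The linear map $\Phi:\R^n\times\R^{mL}\to\R^{qL}$ sending $(x_0,u_{[1,L]})$ to the corresponding trajectory $w_{[1,L]}$ is surjective onto $\B|_L$, so $\dim\B|_L = (n+mL) - \dim\Ker\Phi$.

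The crux, and the only step that uses $L>l$, is the injectivity of $\Phi$. If $\Phi(x_0,u) = \Phi(x_0',u')$, then matching the input components of the trajectory forces $u=u'$, and matching the outputs yields $CA^t(x_0-x_0')=0$ for $t=0,\ldots,L-1$. Since the observability index of $(C,A)$ equals $l$, the matrix $\col(C,CA,\ldots,CA^{l-1})$ already has full column rank $n$, so any $L>l$ (in fact $L\ge l$) forces $x_0=x_0'$. Hence $\Ker\Phi=\{0\}$ and the dimension formula $\dim\B|_L = mL+n$ follows. I do not foresee hidden obstacles; the only subtlety is recalling that the lag coincides with the observability index of a minimal representation, which is exactly why the threshold $L>l$ is the correct one.
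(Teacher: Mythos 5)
Your proposal is correct, and it is essentially the standard argument for this result: the paper itself states the lemma without proof, citing \cite{dorfler2022bridging}, where the dimension count is likewise obtained from a minimal input/state/output representation (trajectory determined by the pair $(x_0,u_{[1,L]})$, injectivity for $L\ge l$ from the fact that the lag equals the observability index). Both halves of your argument --- the image-of-a-subspace observation and the rank-$n$ observability matrix $\col(C,CA,\ldots,CA^{l-1})$ --- are sound, so there is nothing to add.
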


\begin{definition}
A dynamical system $\B \in \mathcal{L}^{q}$ is \textit{controllable} if 
for every  ${T\in \N}$,   
${w^1 \in \B|_{T}}$,  and  ${w^2 \in \B}$ there exists 
${T^{\prime} \in \Zge}$, and ${w \in \B}$ such that 
${w_t = w^1_t}$ for ${t\in \mathbf{T}}$ and
${w_t = w^2_{t-T-T^{\prime}}}$ for ${t > T+T^{\prime}}$.
\end{definition}

\noindent
In other words, a dynamical system is controllable if any two trajectories can be patched together in finite time.

\subsection{The fundamental lemma}

Given a $T$-length trajectory ${w \in \R^{qT}}$ of a controllable dynamical system ${\B \in \mathcal{L}^{q}}$,  
one may obtain a non-parametric representation of the restricted behavior using 
a result first presented in~\cite{willems2005note}, which over time became known as
the \textit{fundamental lemma}. 
To state this result,  we introduce some preliminary notions.

\begin{definition}
The \textit{Hankel matrix} of depth ${L\in\mathbf{T}}$ associated with ${w \in \R^{qT}}$ is defined as
\beq \nn
\scalebox{0.95}{$
H_{L}(w) =
\bma  \nn
\begin{array}{ccccc}
w_{1} & w_{2}  & \cdots &  w_{T-L+1}   \\
w_{2} & w_{3}  & \cdots &   w_{T-L+2}   \\
\vdots  & \vdots  & \ddots & \vdots  \\
w_{L} & w_{L+1}  & \cdots  & w_{T}
\end{array}
\ema \in \R^{(qL) \times (T-L+1)} .
$}
\eeq
\end{definition}

\begin{definition}
A vector ${u \in\R^{mT}}$ is \textit{persistently exciting of order $L$} if $H_L(u)$ is full row rank, i.e.,
${\rank H_L(u) = mL}$.
\end{definition}

\noindent
Persistency of excitation plays a key role in system identification and adaptive control~\cite{ljung1999system, VanOverschee1996subspace,astrom1995adaptive}.  A necessary condition for ${u \in \R^{mT}}$ to be 
persistently exciting of order $L$ is that $H_L(u)$ has at least as many columns as rows, i.e., ${T \ge T_{\min} =  (m+1)L-1}$. We are now ready to state the fundamental lemma~\cite{willems2005note}.
\begin{lemma}[Fundamental lemma] \label{lemma:fundamental_lemma}
Consider a controllable dynamical system ${\B \in \mathcal{L}^{q}}$,  with input/output partition ${w = (u,y)}$.  
Assume ${w^d = (u^d,y^d)\in \B|_{T}}$ and $u^d$ is persistently exciting of order $L + n$.
Then 
${\B|_{L} = \Image H_L(w^d).}$
\end{lemma}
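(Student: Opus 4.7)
My strategy is to prove $\B|_L=\Image H_L(w^d)$ by establishing the two inclusions separately, using a minimal state-space realization of $\B$ as an intermediate tool.

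The inclusion $\Image H_L(w^d)\subseteq \B|_L$ is the easy direction. Each column of $H_L(w^d)$ has the form $\col(w^d_j,w^d_{j+1},\ldots,w^d_{j+L-1})$ for some $j\in\{1,\ldots,T-L+1\}$, and is therefore an $L$-length trajectory of $\B$ by time invariance. Since $\B|_L$ is a linear subspace (Lemma~\ref{lemma:subspace}), it contains every linear combination of these columns, and hence contains the whole image.

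For the reverse inclusion $\B|_L\subseteq\Image H_L(w^d)$, I would fix a minimal state-space realization $(A,B,C,D)$ of $\B$ of order $n$; controllability of $\B$ ensures that $(A,B)$ is reachable. Writing $\mathcal{O}_L$ for the $L$-step observability matrix and $\mathcal{T}_L$ for the Toeplitz matrix of Markov parameters, any $(u,y)\in\B|_L$ starting from state $x_1$ satisfies $y=\mathcal{O}_L x_1+\mathcal{T}_L u$. Applied column-wise to the data with compatible state sequence $X^d=[x^d_1\ \cdots\ x^d_{T-L+1}]$, this yields the factorization $H_L(y^d)=\mathcal{O}_L X^d+\mathcal{T}_L H_L(u^d)$, so that, after an input/output row permutation, $H_L(w^d)$ shares its column image with $M\begin{bmatrix}X^d\\ H_L(u^d)\end{bmatrix}$, where $M=\begin{bmatrix}0 & I\\ \mathcal{O}_L & \mathcal{T}_L\end{bmatrix}$ is injective by construction. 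Therefore it suffices to show that $\begin{bmatrix}X^d\\ H_L(u^d)\end{bmatrix}$ has full row rank $n+mL$: every admissible pair $(x_1,u)\in\R^{n+mL}$ would then arise as a column combination, and Lemma~\ref{lemma:subspace} combined with the dimension count $\dim\B|_L=mL+n$ would close the argument.

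The main obstacle is precisely this rank claim. I would prove it by contradiction: a nontrivial left kernel would produce $(\xi,\eta)\neq 0$ with $\xi^\transpose x^d_t+\eta^\transpose\col(u^d_t,\ldots,u^d_{t+L-1})=0$ for all admissible $t$. Expressing $x^d_t$ through the state recursion in terms of $x^d_1$ and past inputs, and exploiting reachability of $(A,B)$ to span $\R^n$ via an $n$-step input window, this relation can be reorganized into a nontrivial linear dependence among the rows of $H_{L+n}(u^d)$, contradicting persistency of excitation of $u^d$ of order $L+n$. This step, in which controllability and persistency of excitation together propagate rank from the input Hankel matrix to the joint state--input matrix, is precisely where Willems' original insight is consumed.
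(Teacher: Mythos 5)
First, note that the paper does not prove this lemma at all: it is quoted verbatim from the cited reference \cite{willems2005note}, so there is no in-paper argument to compare yours against. Judged on its own terms, your plan follows the standard state-space route. The easy inclusion $\Image H_L(w^d)\subseteq\B|_L$ is fine, and the reduction of the hard inclusion to the full-row-rank claim for $\col(X^d,H_L(u^d))$ is sound: once that matrix has full row rank, $\Image H_L(w^d)=\Image M=\B|_L$ (you do not even need injectivity of $M$ for this, which is fortunate, since injectivity of $\mathcal{O}_L$ --- and the dimension formula $\dim\B|_L=mL+n$ you invoke --- require $L>l$, a hypothesis not present in the statement as quoted).

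The genuine gap is in your final paragraph, i.e., exactly at the crux. Writing $x^d_t=A^{t-1}x^d_1+\sum_{s<t}A^{t-1-s}Bu^d_s$ does \emph{not} reorganize the left-kernel relation into a linear dependence among the rows of $H_{L+n}(u^d)$: the term $\xi^\transpose A^{t-1}x^d_1$ survives, depends on $t$, and the input history reaches back to time $1$ rather than staying inside a window of length $L+n$. The argument that actually works is structurally different. From $\xi^\transpose x^d_t+\eta^\transpose\col(u^d_t,\dots,u^d_{t+L-1})=0$ one substitutes $x^d_{t+k}=A^kx^d_t+[\,A^{k-1}B\ \cdots\ B\,]\col(u^d_t,\dots,u^d_{t+k-1})$ for $k=0,\dots,n$ to produce $n+1$ vectors in the left kernel of $\col(X^d,H_{L+n}(u^d))$ whose state coefficients are $\xi^\transpose A^k$; since $n+1$ row vectors in $\R^{1\times n}$ are linearly dependent, a suitable combination has zero state part, hence zero input part by persistency of excitation of order $L+n$. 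One must then exploit the triangular placement of $\eta$ inside these input coefficients to cascade down to $\eta=0$, and separately argue that $\xi=0$ (using controllability together with the fact that $X^d$ itself has full row rank, which is part of the same claim). Your sentence ``this relation can be reorganized into a nontrivial linear dependence \dots'' asserts precisely this chain without supplying it, and the nontriviality of the resulting dependence is itself something that must be proved. Since this step is the entire content of Willems' lemma, the proposal as written is a correct reduction plus a placeholder where the proof should be.
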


\noindent
Lemma~\ref{lemma:fundamental_lemma} is of paramount importance in data-driven control~\cite{markovsky2020identifiability}. 
It provides conditions for the restricted behavior $\B|_{L}$ to be completely characterized by the image of the Hankel matrix $H_L(w^d)$.
As a result,  the subspace ${\Image H_L(w^d)}$ can be regarded as a non-parametric representation of the dynamical system $\B$,  so long as  $L$-length trajectories are considered. The controllability and persistency of excitation assumptions can be removed by focusing on behaviors of fixed complexity and using the rank condition
\beq  \label{eq:generalized_persistency_of_excitation}
\rank H_L(w^d)   =  mL+ n.
\eeq

\begin{lemma}~\cite[Corollary 19]{markovsky2020identifiability} \label{lemma:fundamental_generalized}
Consider a dynamical system ${\B \in \mathcal{L}^{q,c}}$ and an associated $T$-length trajectory ${w^d \in \B|_{T}}$.
For ${L>l}$,   ${\B|_{L} = \Image H_L(w^d)}$
if and only if~\eqref{eq:generalized_persistency_of_excitation} holds.
\end{lemma}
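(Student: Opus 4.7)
The plan is to prove both directions by combining a basic inclusion coming from linearity and time invariance with the dimension formula of Lemma~\ref{lemma:subspace}.

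First I would establish the inclusion $\Image H_L(w^d) \subseteq \B|_L$, which holds independently of the rank condition and only uses that $\B$ is a linear time invariant complete system and $w^d \in \B|_T$. Each column of $H_L(w^d)$ has the form $\col(w^d_i, w^d_{i+1}, \ldots, w^d_{i+L-1})$ for some $i \in \{1, \ldots, T-L+1\}$. Since $w^d$ is a $T$-length trajectory of $\B$, there exists $v \in \B$ with $v_t = w^d_t$ for $t \in \mathbf{T}$, and by time invariance $\sigma^{i-1} v \in \B$, whose first $L$ entries coincide with that column. Hence every column of $H_L(w^d)$ lies in $\B|_L$, and because $\B|_L$ is a linear subspace by Lemma~\ref{lemma:subspace}, the entire image is contained in $\B|_L$.

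Next I would close both implications via dimensions. By Lemma~\ref{lemma:subspace}, $\dim \B|_L = mL + n$ for $L > l$. For the ``only if'' direction, assume $\B|_L = \Image H_L(w^d)$; then $\rank H_L(w^d) = \dim \Image H_L(w^d) = \dim \B|_L = mL + n$, which is exactly~\eqref{eq:generalized_persistency_of_excitation}. For the ``if'' direction, assume~\eqref{eq:generalized_persistency_of_excitation} holds; then $\dim \Image H_L(w^d) = mL + n = \dim \B|_L$, and combined with the inclusion $\Image H_L(w^d) \subseteq \B|_L$ already established, equality of subspaces of a finite-dimensional ambient space follows.

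The core of the argument is therefore elementary once Lemma~\ref{lemma:subspace} is in hand; no persistency of excitation or controllability is invoked, since the dimension formula already encodes the combinatorial count of ``free'' components of an $L$-length trajectory. The only step requiring care is the inclusion $\Image H_L(w^d) \subseteq \B|_L$, where one must be explicit that the columns of the Hankel matrix are genuine $L$-length trajectories of $\B$ (using time invariance to shift $w^d$) rather than arbitrary vectors built from entries of $w^d$. I do not anticipate a real obstacle, but if one were to arise it would lie in ensuring that the argument does not implicitly assume controllability, which distinguishes this generalized version from the classical fundamental lemma of~\cite{willems2005note}.
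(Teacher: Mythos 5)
Your proof is correct: the inclusion $\Image H_L(w^d) \subseteq \B|_L$ via time invariance and linearity, followed by the dimension count from Lemma~\ref{lemma:subspace} in both directions, is exactly the standard argument for this generalized fundamental lemma. The paper itself gives no proof (it cites Corollary~19 of~\cite{markovsky2020identifiability}), and your reasoning matches that reference's dimension-based approach, correctly avoiding any appeal to controllability or persistency of excitation.
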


For convenience,  in the sequel a $T$-length trajectory ${w^d}$ of a dynamical system ${\B \in \mathcal{L}^{q,c}}$ is said to be     \textit{sufficiently excited of order ${L}$}    if it satisfies the rank condition~\eqref{eq:generalized_persistency_of_excitation}. All of these ``low rank'' results hold obviously for the \emph{deterministic} LTI case, but they can also be used to design effective de-noising schemes by low-rank approximation~\cite{markovsky2021behavioral}.

\section{A metric on restricted behaviors} \label{sec:main_results}

This section explores the issue of uncertainty quantification using a data-based behavioral description of uncertainties. 
The starting point of our analysis is a seemingly elementary,  yet profound consequence of the fundamental lemma: 
restricted behaviors may be regarded as subspaces of equal dimension,  which may be represented directly by data matrices.
Thus,  restricted behaviors may be identified with points on the \textit{Grassmannian}   $\Grass{k}{N}$,  \textit{i.e.}, the set of all subspaces of dimension 
$k$ in $\R^N$,    endowed with the structure of  a (quotient) manifold~\cite[p.63]{boothby2003introduction}.   
Metrics between restricted behaviors thus arise from the underlying Grassmannian structure.

\begin{proposition} \label{thm:metrics_on_behaviors}
The function $d$ is a metric on the set of all restricted behaviors $\B|_L \in \mathcal{L}^{q,c}$, with ${L>l}$, 
whenever $d$ is a metric on $\Grass{mL+n}{qL}$.
\end{proposition}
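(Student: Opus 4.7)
The plan is essentially a straightforward unpacking of definitions, building on Lemma~\ref{lemma:subspace}. The key observation is that the statement reduces to showing that the set of restricted behaviors of fixed complexity $c=(m,l,n)$ (with time horizon $L>l$) can be viewed as a \emph{subset} of the Grassmannian $\Grass{mL+n}{qL}$. Then the fact that $d$ is a metric on the restricted behaviors follows automatically, because a metric on any set restricts to a metric on any subset.

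More concretely, first I would invoke Lemma~\ref{lemma:subspace}: for every $\B \in \mathcal{L}^{q,c}$ and every $L>l$, the restricted behavior $\B|_L$ is a linear subspace of $\R^{qL}$ of dimension exactly $mL+n$. By the very definition of the Grassmannian, this means $\B|_L \in \Grass{mL+n}{qL}$, so the assignment $\B|_L \mapsto \B|_L$ is a well-defined inclusion $\iota$ of the family $\{\B|_L : \B \in \mathcal{L}^{q,c}\}$ into $\Grass{mL+n}{qL}$. Note that $\iota$ is tautologically injective, since two restricted behaviors coincide as points in $\Grass{mL+n}{qL}$ precisely when they coincide as subspaces of $\R^{qL}$, which is exactly when they coincide as restricted behaviors.

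Second, I would verify that the four metric axioms transfer. Given $\B^1|_L$ and $\B^2|_L$ in the family, define $d(\B^1|_L, \B^2|_L)$ to be the value of the metric $d$ on $\Grass{mL+n}{qL}$ evaluated at the two subspaces. Non-negativity, symmetry, and the triangle inequality hold on $\Grass{mL+n}{qL}$ and therefore hold on the subset; the identity of indiscernibles follows from the injectivity of $\iota$, so that $d(\B^1|_L, \B^2|_L)=0$ iff $\iota(\B^1|_L)=\iota(\B^2|_L)$ iff $\B^1|_L = \B^2|_L$.

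There is really no main obstacle here: the content is entirely in Lemma~\ref{lemma:subspace}, which guarantees that all restricted behaviors of the given complexity live in a \emph{single} Grassmannian (of the same dimension $mL+n$ in the same ambient space $\R^{qL}$). This is what lets us compare any two of them using a common metric $d$. The only point to be slightly careful about is emphasizing that $L>l$ is essential, since it is the hypothesis under which Lemma~\ref{lemma:subspace} pins down the dimension; without it, different behaviors in $\mathcal{L}^{q,c}$ could produce restricted behaviors of different dimensions and would not lie in a common Grassmannian.
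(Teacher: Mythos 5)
Your proposal is correct and follows exactly the same route as the paper's proof: invoke Lemma~\ref{lemma:subspace} to place every restricted behavior $\B|_L$ (with $L>l$) as a point of $\Grass{mL+n}{qL}$, and then observe that a metric on a set restricts to a metric on any subset. Your added remarks on injectivity of the inclusion and the role of $L>l$ merely spell out details the paper leaves implicit.
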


\begin{proof}
Let ${L>l}$ and let $d$ be a metric on $\Grass{mL+n}{qL}$.  By Lemma~\ref{lemma:subspace},  the set of all restricted behaviors ${\B|_L \in \mathcal{L}^{q,c}}$ is a subset of $\Grass{mL+n}{qL}$.  Then the set of all restricted behaviors $\B|_L \in \mathcal{L}^{q,c}$ endowed with the metric $d$ is also a metric space,  since any subset of a metric space is itself a metric space with respect to the induced metric~\cite[p.38]{carothers2000real}.
\end{proof}

With these premises,   a natural question is:  what is a good notion of distance for restricted behaviors? 
Ideally,  a metric should be intrinsic,   
easily computed,
and 
readily understood in system-theoretic terms.
The aforementioned properties provide an identikit of the desired distance and pave the way for the discussion in this section,  where we explore a notion of distance between restricted behaviors.

\subsection{The gap between restricted behaviors}

The gap metric plays a pivotal role in control  theory\cite{zames1981uncertainty,partington2004linear,zhou1996robust,vinnicombe2001uncertainty}
and,  in many ways, it reflects the intuitive notion of distance between subspaces.
This section introduces the $L$-gap metric as a direct finite-time counterpart of the classical gap metric.   To this end,  we recall a few preliminary notions.

Let ${\mathcal{S}}$ be a normed space with norm $\norm{\,\cdot\,}$.  Let $v\in \mathcal{S}$ and let $\mathcal{W}$ be subspace of $\mathcal{S}$. 
The \textit{distance between $v$ and $\mathcal{W}$} is defined as~\cite[p.7]{kato1980perturbation} 
\beq \nn
\delta(v,\mathcal{W}) = \inf_{w \in\mathcal{W}} \norm{v-w}.
\eeq
\noindent
If $\norm{\,\cdot\,}_2$ is the Euclidean $2$-norm in $\R^N$,  the distance between  $v$ and $\mathcal{W}$ is the distance between $v$ and its projection onto $\mathcal{W}$,  \textit{i.e.},  ${\delta(v,\mathcal{W})= \norm{(I-P_\mathcal{W})v}_2,}$ where $P_\mathcal{W}$ is the orthogonal projector onto the subspace $\mathcal{W}$.

\begin{definition}  \cite[p.30]{partington2004linear}
Let $\mathcal{V}$ and $\mathcal{W}$ be closed subspaces of a Hilbert space $\mathcal{H}$.    
The \textit{gap between  $\mathcal{V}$ and $\mathcal{W}$} is defined as
\beq \label{eq:gap}
\text{gap}_{\mathcal{H}}(\mathcal{V},\mathcal{W}) 
=
 \norm{P_{\mathcal{V}} - P_{\mathcal{W}}},
\eeq
where $P_\mathcal{V}$ and $P_\mathcal{W}$ are the orthogonal projectors onto $\mathcal{V}$  and $\mathcal{W}$,  respectively. 
\end{definition}

\noindent
The gap between  $\mathcal{V}$ and $\mathcal{W}$ may be expressed as~\cite[p.30]{partington2004linear}
\beq\nn
\text{gap}_\mathcal{H}(\mathcal{V},\mathcal{W}) 
	= \max\left\{   \norm{(I-P_{\mathcal{W}})P_{\mathcal{V}}} 	, \norm{(I-P_{\mathcal{V}})P_{\mathcal{W}}} 		\right\} .
\eeq
In particular,  ${0 \le \text{gap}_{\mathcal{H}}(\mathcal{V},\mathcal{W}) \le 1}$ for all  $\mathcal{V}$ and $\mathcal{W}$. To streamline the exposition,   we also recall the notion of \textit{directed gap between  $\mathcal{V}$ and $\mathcal{W}$}  which is defined as
\beq \label{eq:directed_gap}
\overset{\rightharpoonup}{\text{gap}}_{\mathcal{H}}(\mathcal{V},\mathcal{W}) 
= 
\sup_{v \in \mathcal{V} \atop \norm{v}=1}\delta(v,\mathcal{W}) =  \norm{(I-P_{\mathcal{W}})P_{\mathcal{V}}}.
\eeq 
Clearly, 
$\text{gap}_{\mathcal{H}}(\mathcal{V},\mathcal{W}) 
	= \max\{ \,
			\overset{\rightharpoonup}{\text{gap}}_{\mathcal{H}}(\mathcal{V},\mathcal{W})  ,\, \,
			\overset{\rightharpoonup}{\text{gap}}_{\mathcal{H}}(\mathcal{W},\mathcal{V})  \,
\}$.
Note that no explicit mention to any particular choice of $\norm{\,\cdot\,}$ is actually needed when the ambient Hilbert space $\mathcal{H}$ is $\R^N$,  since all the gap functions are equivalent~\cite[p.91]{stewart1990perturbation}.   Throughout the paper,  we denote  by $\text{gap}$ the gap metric corresponding  the Euclidean $2$-norm $\norm{\,\cdot\,}_2$ to streamline the notation.

We are now ready to introduce a notion of distance between restricted behaviors. 

\begin{definition}  \label{def:L_gap}
Let ${\B \in \mathcal{L}^{q,c}}$ and ${\tilde{\B} \in \mathcal{L}^{q,c}}$.  For ${L\in\Zge}$,  the \textit{$L$-gap between  $\B$ and $\tilde{\B}$} is defined as 
\beq \label{eq:gapL_def}
\text{gap}_{L}(\B,\tilde{\B})  
 = \text{gap}(\B|_{L},\tilde{\B}|_{L})   .
\eeq
The \textit{directed $L$-gap between  $\B$ and $\tilde{\B}$} is defined as
$\overset{\rightharpoonup}{\text{gap}}_L(\B,\tilde{\B})   
=  
\overset{\rightharpoonup}{\text{gap}}(\B|_{L},\tilde{\B}|_{L}). $
\end{definition}

\begin{remark}
The $L$-gap can also be defined for behaviors with different lags.  However, for clarity of exposition we define it here for behaviors of the same complexity $c$.
\end{remark}

\noindent  
By Proposition~\ref{thm:metrics_on_behaviors} and since $\text{gap}$ is a metric on $\Grass{k}{N}$  for ${k,  N\in\N}$~\cite[p.93]{stewart1990perturbation}, 
we have the following result.  

\begin{corollary}
The set of all restricted behaviors $\B|_L \in \mathcal{L}^{q,c}$, with ${L>l}$,   equipped with   $\text{gap}_{L}$ is a metric space.
\end{corollary}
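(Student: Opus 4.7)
The plan is to observe that this corollary is an almost immediate consequence of Proposition~\ref{thm:metrics_on_behaviors} together with the known fact that the gap function is a metric on every Grassmannian. No new ideas are required; the whole argument reduces to checking that the hypothesis of Proposition~\ref{thm:metrics_on_behaviors} is met with the particular choice $d = \text{gap}$.

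First I would fix the ambient linear space. For $L > l$, Lemma~\ref{lemma:subspace} tells us that any restricted behavior $\B|_L$ with $\B \in \L^{q,c}$ is a linear subspace of $\R^{qL}$ of dimension exactly $mL + n$. Hence each such $\B|_L$ is a point of the Grassmannian $\Grass{mL+n}{qL}$, and the map $(\B,\tilde{\B}) \mapsto \text{gap}(\B|_L,\tilde{\B}|_L)$ of Definition~\ref{def:L_gap} is well-defined on the set of restricted behaviors of complexity $c$.

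Next I would invoke the classical fact that the gap $\text{gap}$ is a metric on $\Grass{k}{N}$ for every $k, N \in \N$, which is recorded in~\cite[p.93]{stewart1990perturbation} and also referenced in the discussion preceding Definition~\ref{def:L_gap}. Applying Proposition~\ref{thm:metrics_on_behaviors} with $d = \text{gap}$ and with $k = mL+n$, $N = qL$ then yields directly that $\text{gap}_L$ is a metric on the set of all restricted behaviors $\B|_L \in \L^{q,c}$.

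I do not anticipate any genuine obstacle: the corollary is essentially a bookkeeping statement. The only care one has to take is to make sure that the dimensions match across all behaviors of complexity $c$, so that every restricted behavior lives in the same Grassmannian and the gap indeed defines a metric on that common ambient space; this is exactly guaranteed by Lemma~\ref{lemma:subspace} through the assumption $L > l$ and the fixed complexity $c = (m,l,n)$.
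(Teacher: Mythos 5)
Your proposal is correct and follows exactly the paper's own route: the paper derives this corollary by combining Proposition~\ref{thm:metrics_on_behaviors} with the fact that $\text{gap}$ is a metric on $\Grass{k}{N}$ (via Lemma~\ref{lemma:subspace} to place all restricted behaviors of complexity $c$ in the common Grassmannian $\Grass{mL+n}{qL}$). No discrepancies to report.
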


\begin{remark}[Geometry]
The gap metric has a well-known geometric interpretation in terms of the sine of the largest principal angle between two subspaces. 
In particular,  as an immediate consequence of~\cite[Theorem 4.5]{stewart1990perturbation},  for ${L>l}$,  the $L$-gap between 
${\B\in \mathcal{L}^{q,c}}$ and ${\tilde{\B}\in \mathcal{L}^{q,c}}$ is 
${\text{gap}_{L}(\B,\tilde{\B})   = \sin \theta_{\max}, }$ where ${\theta_{\max}}$ is the largest principal angle between the subspaces $\B|_{L}$ and $\tilde{\B}|_{L}$ (see Appendix~\ref{ssec:appendix-angles} for more detail on principal angles).
\end{remark}

\begin{remark}[Data-based computation]
The $L$-gap between behaviors can be directly computed from the knowledge of sufficiently excited trajectories.
Let ${w^{d} \in \B|_{T}}$ and ${\tilde{w}^{d} \in \tilde{\B}|_{T}}$ be sufficiently excited $T$-length trajectories of order ${L}$,  with  ${L>l}$.  
Let
\[
\begin{aligned}
&H_L(w^d) = [\,U_{1} \, U_{2}\,]  \begin{bmatrix} S & 0 \\ 0 & 0\end{bmatrix} \begin{bmatrix}V_1 \\ V_2\end{bmatrix},\\
&H_L(\tilde{w}^d) = [\,\tilde{U}_{1} \, \tilde{U}_{2}\,]  \begin{bmatrix}\tilde{S} & 0 \\ 0 & 0\end{bmatrix} \begin{bmatrix}\tilde{V}_1 \\ \tilde{V}_2\end{bmatrix}
\end{aligned}
\]
be the singular value decomposition (SVD) of the Hankel matrices ${H_L(w^d)}$ and ${H_L(\tilde{w}^d)}$  
with ${U_{1} \in \R^{qL \times (mL+n)}}$ and ${\tilde{U}_{1} \in \R^{qL \times (mL+n)}}$, respectively.
Then
\begin{equation}\label{eq:U1U2}
{\text{gap}_{L}(\B,\tilde{\B}) = \lVert{U_{1} U_{1}^{\transpose}  - \tilde{U}_{1} \tilde{U}_{1}^{\transpose}}\rVert_2} = \|\tilde{U}_2^{\transpose}U_1\|_2
\end{equation}
where the first equality comes from the fact that
$\text{gap}_{L}(\B,\tilde{\B})   =  \lVert{P_{\B|_{L}} - P_{\tilde{\B}|_{L}}\rVert}_2$
and since $P_{{\B}|_{L}} = U_{1} U_{1}^{\transpose}$ and $P_{\tilde{\B}|_{L}}= \tilde{U}_{1} \tilde{U}_{1}^{\transpose}$~\cite[p.82]{golub2013matrix}. The second identity is due to~\cite[Thm 2.5.1]{golub2013matrix}.
\end{remark}

\begin{remark}[Interpretation in terms of trajectories]
Given ${\B \in \mathcal{L}^{q,c}}$,  consider the problem of estimating the closest trajectory ${w \in \B|_{L}}$ to a given measured 
trajectory ${\tilde{w} \in \R^{qL} }$ which belongs to a possibly distinct behavior ${\tilde{\B} \in \mathcal{L}^{q,c}}$, \textit{i.e.},
\beq \nn \label{eq:initial_condition_estimation}
\begin{array}{ll}
    \displaystyle \underset{w \in  \B|_{L}} {\mbox{minimize}}    & \Vert w - \tilde{w} \Vert_{2}^2 , \\
    \mbox{subject to}    & \tilde{w} \in  \tilde{\B}|_{L} .
\end{array} 
\eeq
By Lemma~\ref{lemma:subspace},  $\B|_{L}$ is a subspace and the estimation error is
\beq \label{eq:initial_condition_estimation_error}
\inf_{w \in \B|_{L}}  \norm{w - \tilde{w}}_2   = \norm{(I-P_{{\B}|_{L}})\tilde{w}}_2 .
\eeq
Now suppose  ${\tilde{\B}}$ is known to be such that ${\text{gap}_L(\B,\tilde{\B}) \le \epsilon}$.  Then 
\beq \nn \label{eq:initial_condition_estimation_error_worst_case}
\sup_{\tilde{w} \in \tilde{\B}|_{L} \atop \norm{\tilde{w}}_2 \not = 0 } \inf_{w \in \B|_{L}}
\frac{ \norm{w - \tilde{w}}_2}{\norm{\tilde{w}}_2}  
\le 
 \epsilon .
\eeq
In other words,  $\text{gap}_L(\B,\tilde{\B})$ is an upper bound for the \textit{worst case} relative estimation error.   
The domain of the $L$-gap metric may be extended 
to measure distances between subspaces of different dimension~\cite{ye2016schubert},  so these results may be used,  e.g.,  for smoothing of a noisy trajectory $\tilde{w}$.  We elaborate more on this in Section~\ref{sec:conclusion}.   
\end{remark}

\subsection{Uncertainty quantification}

The following theorem, which is inspired by \cite[Prop. 7]{qui1992pointwise}, provides an upper and a lower bound on the $L$-gap in case the restricted behaviors have a specific form.

\begin{theorem}
\label{t:gapandtwonorm}
Let ${\B\in \mathcal{L}^{q,c}}$ and ${\tilde{\B}\in \mathcal{L}^{q,c}}$.    Given ${L \in \Zge}$,  with  ${L>l}$,   assume
\begin{equation}
\label{behaviorsIF}
\B|_L = \Image 
\bma
\begin{array}{c}
I \\
F
\end{array}
\ema
\text{ and }
\tilde{\B}|_L = \Image 
\bma
\begin{array}{c}
I \\
\tilde{F}
\end{array}
\ema.
\end{equation}
Then 
\begin{equation}
\label{boundsgapL}
\frac{  \lVert F - \tilde{F} \rVert_2 }{\sqrt{1+\norm{F}_2^2} \sqrt{1+\lVert \tilde{F} \rVert_2^2}} \le
\text{gap}_L (\B, \tilde{\B})
 \le  \lVert F - \tilde{F} \rVert_2.
\end{equation}
\end{theorem}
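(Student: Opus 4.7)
The plan is to establish the two inequalities in~\eqref{boundsgapL} separately, exploiting the graph-like representation~\eqref{behaviorsIF} together with the directed gap characterization from~\eqref{eq:directed_gap}. The key observation is that any ${v \in \B|_L}$ admits a unique parametrization $v = \col(x, Fx)$ (and similarly for $\tilde{\B}|_L$), so distances between these subspaces reduce to quadratic computations in the underlying parameter $x$.

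For the upper bound I would pick any $v = \col(x, Fx) \in \B|_L$ and compare it with the natural (not necessarily optimal) candidate $\tilde{v} = \col(x, \tilde{F}x) \in \tilde{\B}|_L$. Since $\|v\|_2^2 = \|x\|_2^2 + \|Fx\|_2^2 \ge \|x\|_2^2$, one has $\|x\|_2 \le \|v\|_2$ and $\|v - \tilde{v}\|_2 = \|(F - \tilde{F})x\|_2 \le \|F - \tilde{F}\|_2 \|v\|_2$. This immediately yields ${\overset{\rightharpoonup}{\text{gap}}(\B|_L, \tilde{\B}|_L) \le \|F - \tilde{F}\|_2}$, and by symmetry the reverse directed gap satisfies the same bound. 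Taking the maximum gives the right-hand inequality.

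For the lower bound I would compute $\delta(v, \tilde{\B}|_L)^2$ exactly for $v = \col(x, Fx)$ by minimizing $\|x - \tilde{x}\|_2^2 + \|Fx - \tilde{F}\tilde{x}\|_2^2$ over $\tilde{x}$. The first-order condition gives $\tilde{x} = (I + \tilde{F}^{\transpose}\tilde{F})^{-1}(I + \tilde{F}^{\transpose}F)x$; substituting back and applying the push-through identity $(I + \tilde{F}^{\transpose}\tilde{F})^{-1}\tilde{F}^{\transpose} = \tilde{F}^{\transpose}(I + \tilde{F}\tilde{F}^{\transpose})^{-1}$ together with $\tilde{F}\tilde{F}^{\transpose}(I+\tilde{F}\tilde{F}^{\transpose})^{-1} = I - (I+\tilde{F}\tilde{F}^{\transpose})^{-1}$ collapses the two summands into the clean expression
\begin{equation*}
\delta(v, \tilde{\B}|_L)^2 = \bigl[(F - \tilde{F})x\bigr]^{\transpose} (I + \tilde{F}\tilde{F}^{\transpose})^{-1} (F - \tilde{F})x.
\end{equation*}
Using $(I + \tilde{F}\tilde{F}^{\transpose})^{-1} \succeq (1+\|\tilde{F}\|_2^2)^{-1} I$ (smallest eigenvalue), combined with $\|v\|_2^2 \le (1+\|F\|_2^2)\|x\|_2^2$, and choosing $x$ aligned with the top right singular vector of $F - \tilde{F}$, one obtains ${\overset{\rightharpoonup}{\text{gap}}(\B|_L, \tilde{\B}|_L) \ge \|F - \tilde{F}\|_2 / \sqrt{(1+\|F\|_2^2)(1+\|\tilde{F}\|_2^2)}}$, and since $\text{gap}_L(\B, \tilde{\B}) \ge \overset{\rightharpoonup}{\text{gap}}(\B|_L, \tilde{\B}|_L)$ the left-hand inequality follows.

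The main obstacle is the lower bound: the upper bound only requires a natural candidate, whereas the lower bound hinges on carrying out the exact minimization and recognizing that the two squared-norm terms collapse to a single positive definite quadratic form in $(F - \tilde{F})x$ via the push-through and Woodbury identities. Careful bookkeeping of the two normalization factors, $\sqrt{1+\|F\|_2^2}$ coming from $\|v\|_2$ versus $\|x\|_2$ and $\sqrt{1+\|\tilde{F}\|_2^2}$ coming from the smallest eigenvalue of $(I+\tilde{F}\tilde{F}^{\transpose})^{-1}$, is what ultimately produces the constant claimed in~\eqref{boundsgapL}.
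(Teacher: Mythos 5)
Your proposal is correct, and it reaches the bounds by a genuinely different (more elementary) route than the paper. The paper starts from the data-based identity $\text{gap}_{L}(\B,\tilde{\B}) = \lVert\tilde{U}_2^{\transpose}U_1\rVert_2$ in \eqref{eq:U1U2}, plugs in the orthonormalized graph bases to get $\text{gap}_L(\B,\tilde{\B}) = \lVert (I+\tilde{F}\tilde{F}^{\transpose})^{-1/2}(F-\tilde{F})(I+F^{\transpose}F)^{-1/2}\rVert_2$, and then bounds this matrix product above by submultiplicativity and below by $\sigma_{\min}$ factors, computing $\sigma_{\min}((I+\tilde{F}\tilde{F}^{\transpose})^{-1/2}) = (1+\lVert\tilde{F}\rVert_2^2)^{-1/2}$ via an eigenvalue argument. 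You instead work straight from the variational definition \eqref{eq:directed_gap}: for the upper bound you only exhibit the feasible candidate $\col(x,\tilde{F}x)$, which is cleaner than the paper's norm inequality and makes the symmetry of the two directed gaps transparent; for the lower bound your exact minimization yields $\delta(v,\tilde{\B}|_L)^2 = [(F-\tilde{F})x]^{\transpose}(I+\tilde{F}\tilde{F}^{\transpose})^{-1}(F-\tilde{F})x$, which I confirm is correct (it is $v^{\transpose}N(N^{\transpose}N)^{-1}N^{\transpose}v$ with $N = \col(-\tilde{F}^{\transpose}, I)$ spanning $(\tilde{\B}|_L)^{\perp}$, so the push-through manipulations can even be bypassed), and your eigenvalue bound $(I+\tilde{F}\tilde{F}^{\transpose})^{-1}\succeq(1+\lVert\tilde{F}\rVert_2^2)^{-1}I$ is the same fact the paper establishes. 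What the paper's route buys is that the whole theorem reduces to one operator-norm expression tied directly to the computable formula \eqref{eq:U1U2}; what your route buys is independence from that formula (in particular you never need to know that the two directed gaps coincide for equidimensional subspaces) and an argument phrased entirely in terms of trajectories and projections, at the cost of a slightly longer lower-bound computation.
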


\begin{proof}
We first prove the left inequality, i.e., the lower bound on the $L$-gap. By~\eqref{eq:U1U2}, 
\begin{align*}
\text{gap}_L (\B, \tilde{\B}) \!=\! \lVert (I\!+\!\tilde{F}\tilde{F}^\top)^{-\frac{1}{2}} \! \begin{bmatrix}
-\tilde{F} & I
\end{bmatrix} \! \begin{bmatrix}
I \\ F
\end{bmatrix} \!(I\!+\!F^\top F)^{-\frac{1}{2}} \rVert_2 \\
\geq \sigma_{\text{min}}((I+\tilde{F}\tilde{F}^\top)^{-\frac{1}{2}}) \: \sigma_{\text{min}}((I+F^\top F)^{-\frac{1}{2}}) \: \lVert F-\tilde{F}\rVert_2,
\end{align*}
where $\sigma_{\text{min}}(X)$ denotes the smallest singular value of a given matrix $X$. Next, we will work out $\sigma_{\text{min}}((I+\tilde{F}\tilde{F}^\top)^{-\frac{1}{2}})$. Note that
\begin{align*}
\sigma_{\text{min}}((I+\tilde{F}\tilde{F}^\top)^{-\frac{1}{2}}) &= \frac{1}{\sigma_{\text{max}}((I+\tilde{F}\tilde{F}^\top)^{\frac{1}{2}})} \\
&= \frac{1}{\sqrt{ \sigma_{\text{max}}(I+\tilde{F}\tilde{F}^\top) }} \\
&= \frac{1}{\sqrt{ \lVert(I+\tilde{F}\tilde{F}^\top)\rVert_2 }},
\end{align*}
where $\sigma_{\text{max}}(X)$ denotes the largest singular value of $X$. Finally, note that for any eigenvalue $\lambda$ and corresponding eigenvector $v$ of $\tilde{F}\tilde{F}^\top$, we have that
$$
(I+\tilde{F}\tilde{F}^\top)(I+\tilde{F}\tilde{F}^\top)v = (1+\lambda)^2 v.
$$
Therefore, every singular value of $(I+\tilde{F}\tilde{F}^\top)$ is of the form $1+\sigma$, where $\sqrt{\sigma}$ is a singular value of $\tilde{F}$. We conclude that $\lVert I+\tilde{F}\tilde{F}^\top \rVert_2 = 1 + \lVert \tilde{F} \rVert_2^2$. This establishes that
$$
\sigma_{\text{min}}((I+\tilde{F}\tilde{F}^\top)^{-\frac{1}{2}}) = \frac{1}{ \sqrt{1 + \lVert \tilde{F} \rVert_2^2 }}.
$$
In similar fashion, we can prove that 
$$
\sigma_{\text{min}}((I+F^\top F)^{-\frac{1}{2}}) = \frac{1}{\sqrt{ 1 + \lVert F \rVert_2^2 }}.
$$
By substituting the latter two equalities in the lower bound on $\text{gap}_L (\B, \tilde{\B})$ we obtain the left inequality of \eqref{boundsgapL}.

To prove the upper bound on the gap, i.e., the right inequality of \eqref{boundsgapL}, we again use \eqref{eq:U1U2} to obtain
\begin{align*}
\text{gap}_L (\B, \tilde{\B}) \!&\leq\! \lVert (I\!+\!\tilde{F}\tilde{F}^\top)^{-\frac{1}{2}} \rVert_2 \: \lVert F-\tilde{F} \rVert_2 \: \lVert(I\!+\!F^\top F)^{-\frac{1}{2}}\rVert_2 \\
&\leq \lVert F-\tilde{F} \rVert_2
\end{align*}
since both $\lVert (I\!+\!\tilde{F}\tilde{F}^\top)^{-\frac{1}{2}} \rVert_2 \leq 1$ and $\lVert(I\!+\!F^\top F)^{-\frac{1}{2}}\rVert_2 \leq 1$. This establishes the upper bound in \eqref{boundsgapL}, which proves the theorem. 
\end{proof}

We have the following corollary for AR models.
\begin{corollary}\label{cor:ARX}
Let ${\B \in \mathcal{L}^{q}}$ and ${\tilde{\B} \in \mathcal{L}^{q,c}}$.  
Given ${L \in \Zge}$,  with  ${L>l}$,   assume $\B|_{L}$ and $\tilde{\B}|_{L}$
are defined by the single-input, single-output AR models with real valued coefficients $\{a_k,b_k,\tilde{a}_k,\tilde{b}_k\}_{k=0}^{L-1}$, respectively:
\begin{equation}\label{eq:ARX}
\begin{aligned}
y_{t+L-1} &= \sum_{k=0}^{L-2}a_ky_{t+k} +\sum_{k=0}^{L-1} b_ku_{t+k} , \\
y_{t+L-1} &= \sum_{k=0}^{L-2}\tilde{a}_ky_{t+k} +\sum_{k=0}^{L-1} \tilde{b}_ku_{t+k} .
\end{aligned}
\end{equation}
Assume 
\begin{align*}
F &= 
\bma 
\begin{array}{ccccccccc}
 a_0  & b_0 & \dots & a_{L-2} & b_{L-2} & b_{ L-1}
\end{array}
 \ema , \\
\tilde{F} &= 
\bma 
\begin{array}{ccccccccc}
\tilde{a}_0  & \tilde{b}_0 & \dots & \tilde{a}_{L-2} & \tilde{b}_{L-2} & \tilde{b}_{ L-1}
\end{array}
\ema .
\end{align*}
are such that $\|F - \tilde{F}\|_2 \leq\epsilon$. Then 
$\textup{gap}_L(\B,\tilde{\B})\leq \epsilon$.
\end{corollary}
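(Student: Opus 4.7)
The plan is to reduce Corollary~\ref{cor:ARX} to Theorem~\ref{t:gapandtwonorm} by recognizing the AR equation as a single linear constraint on the $L$-length trajectory, in which $y_L$ is the ``dependent'' coordinate and the $2L-1$ remaining signal components are ``free.'' Evaluating the first line of \eqref{eq:ARX} at $t=1$ gives
\[
y_L = a_0 y_1 + b_0 u_1 + a_1 y_2 + b_1 u_2 + \cdots + a_{L-2} y_{L-1} + b_{L-2} u_{L-1} + b_{L-1} u_L,
\]
so that $y_L = F\,\xi$ where $\xi = \col(y_1, u_1, \ldots, y_{L-1}, u_{L-1}, u_L)$ is precisely the column of free coordinates corresponding to the row vector $F$ in the statement (and analogously for $\tilde{F}$ in $\tilde{\B}|_L$). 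Since $\xi$ is free in $\R^{2L-1}$, both restricted behaviors are graphs over it.

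The trajectory space uses the conventional ordering $\col(u_1,y_1,\ldots,u_L,y_L)$, so I would introduce a permutation matrix $\Pi \in \R^{2L\times 2L}$ that reorders $w$ as $(\xi, y_L)$, placing $y_L$ last. Applied simultaneously to both behaviors, this gives
\[
\Pi\,\B|_L = \Image\!\bma\begin{array}{c} I \\ F \end{array}\ema,
\qquad
\Pi\,\tilde{\B}|_L = \Image\!\bma\begin{array}{c} I \\ \tilde{F} \end{array}\ema,
\]
which is exactly the form assumed in \eqref{behaviorsIF}.

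The final ingredient is orthogonal invariance of the gap: since $\Pi$ is orthogonal, $P_{\Pi\mathcal{V}} = \Pi P_{\mathcal{V}} \Pi^{\transpose}$ for any subspace $\mathcal{V}$, and the spectral norm is unitarily invariant, hence
\[
\text{gap}_L(\B,\tilde{\B}) = \lVert P_{\B|_L} - P_{\tilde{\B}|_L} \rVert_2 = \lVert P_{\Pi\B|_L} - P_{\Pi\tilde{\B}|_L} \rVert_2.
\]
Applying the upper bound of Theorem~\ref{t:gapandtwonorm} to the permuted behaviors yields $\text{gap}_L(\B,\tilde{\B}) \le \lVert F - \tilde{F}\rVert_2 \le \epsilon$, which is the claim.

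The only real care needed is bookkeeping: checking that the listing order of the coefficients inside $F$ matches the order of the free coordinates in $\xi$, and that one and the same permutation $\Pi$ works for both behaviors (which it does, since both AR models share the same input/output partition and the same role for $y_L$). Everything else—the orthogonality of $\Pi$, the equivariance of projectors, and the upper bound itself—is standard and is already provided by Theorem~\ref{t:gapandtwonorm}.
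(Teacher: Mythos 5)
Your proposal is correct and follows essentially the same route as the paper: both proofs observe that, after ordering the trajectory coordinates so that the final output comes last, each restricted behavior takes the form $\Image\!\bma\begin{smallmatrix} I \\ F \end{smallmatrix}\ema$ (the paper phrases this via the kernel representation $\ker[\,F \ \ -I\,]$), and then invoke the upper bound of Theorem~\ref{t:gapandtwonorm}. Your explicit justification that the permutation is orthogonal and hence leaves the gap unchanged is a detail the paper leaves implicit, but it is the same argument.
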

\begin{proof}
Note that $\B|_{L}=\textup{ker}\begin{bmatrix}F & -I\end{bmatrix}.$
Indeed, by definition of $F$, given any trajectory 
$$w=\textup{col}(y_0,u_0,\dots,y_{L-2},u_{L-2},u_{L-1},y_{L-1}) \in\B|_{L}, $$
we have $\begin{bmatrix}F & -I\end{bmatrix}w=0.$
Note that 
$\textup{ker}\begin{bmatrix}F & -I\end{bmatrix}=\textup{im}\begin{bmatrix}I\\ F\end{bmatrix}.$
The same can be shown for $\tilde{F}$.  Leveraging on Theorem~\ref{t:gapandtwonorm} yields the desired result.
\end{proof}

\begin{remark}
We presented Corollary~\ref{cor:ARX} for single-input single-output models for clarity of exposition, but the result holds for more general multi-input multi-output systems. Corollary~\ref{cor:ARX} raises the natural open question of how to relate the $L$-gap to classical uncertainty models~\cite{zames1981uncertainty,partington2004linear,zhou1996robust,vinnicombe2001uncertainty} including additive, multiplicative, and coprime factor uncertainties. This is left as an area of future work.
\end{remark}

\subsection{Connection with the $\ell_2$-gap metric}

The gap metric plays a central role in robust control theory~\cite{zames1981uncertainty,partington2004linear,zhou1996robust,vinnicombe2001uncertainty},  where finite-dimensional,  LTI systems are regarded as operators acting on a 
 given Hilbert space $\mathcal{H}$ (such as\footnote{$\ell_2^m$ is the Hilbert space of square summable sequences $u: \Zge \to \R^m$,  with norm  
$${\norm{u}_{\ell_2}  = \sum_{t=0}^{\infty} \norm{u_t}_2^2}.$$  
$H_2(\mathbb{D}^c)^m$ is the Hardy space of functions ${f :\Co \to \Co^m}$ which are analytic in the complement  of the closed unit disk $\mathbb{D}$,  with norm~\cite[p.13]{partington2004linear}
\beq \nn
\norm{f}_{H_2(\mathbb{D}^c)} = \sup_{|r|>1} \left( \frac{1}{2\pi}  \int_{0}^{2\pi} \norm{f(re^{i\theta})}_2^2 d\theta \right)^{1/2} .
\eeq
} 
$\ell_2$ or $H_2(\mathbb{D}^c)$).  In this context,  the distance between finite-dimensional,    LTI  systems $\Sigma$ and $\tilde{\Sigma}$ is defined in terms of the    \textit{gap}   between the graphs of the corresponding input-output operators. 
 
\begin{definition}~\cite[p.30]{partington2004linear} \label{def:gap_l2}
Let\footnote{Given a mapping ${P: \mathcal{U} \to \mathcal{Y}}$,  then its \textit{graph} is the subset
$\text{graph}(P)$ of ${\mathcal{U} \times \mathcal{Y}}$ defined as 
${\text{graph}(P) = \{(u,Pu) \, :\,  u \in \mathcal{U} \}}$~\cite[p.17]{partington2004linear}. 
If $P$ is an operator defined on a normed subspace of  $\mathcal{U}$,     then that subspace is called the \textit{domain} of the operator $P$ and is denoted by ${\text{dom}(P)}$~\cite[p.18]{partington2004linear}.   To simplify the exposition,   if ${\mathcal{U}=\ell_2^m}$ and  ${\mathcal{Y}=  \ell_2^p}$,  we write
$\text{dom}_{\ell_2}(P)
=
\left\{ 
u \in   \ell_2^{m} 
\, : \,
P u
\in \ell_2^{p} 
\right\}$
and
$\text{graph}_{\ell_2}(P) = 
\{ 
(u,Pu)  \in \ell_2^{m+p} 
\, : \,
u \in \text{dom}_{\ell_2}(P)\} 
$. }  
${\mathcal{H} = \mathcal{U}\times \mathcal{Y}}$,  with $\mathcal{U}$ and $\mathcal{Y}$ Hilbert spaces.
Let ${P: \text{dom}(P)  \to \mathcal{Y}}$ and  ${\tilde{P}: \text{dom}(\tilde{P})  \to \mathcal{Y}}$ be closed operators,  
with ${\text{dom}(P)}$ and ${\text{dom}(\tilde{P})}$  being   subspaces of $\mathcal{U}$.
The \textit{gap between $P$ and $\tilde{P}$} is defined as
\beq \nn 
\text{gap}_{\mathcal{H}}(P,\tilde{P}) = \text{gap}_{\mathcal{H}}(\text{graph}(P),\text{graph}(\tilde{P})) .
\eeq
\end{definition}

We now show that,  under certain assumptions,  the $L$-gap metric can be connected to the classical $\ell_2$-gap metric.  
The proof is deferred to Appendix~\ref{ssec:proof_thm_2}.

\begin{theorem} \label{thm:gap_connection}
Let ${\B \in \mathcal{L}^{q}}$ and ${\tilde{\B} \in \mathcal{L}^{q,c}}$.  Assume $\B = \text{graph}_{\ell_2} (P)$ and $\B = \text{graph}_{\ell_2} (\tilde{P})$, 
with ${P}$ and ${\tilde{P}}$ bounded linear operators on $\ell_2^m$.  Then 
\beq \nn
\lim_{L \to \infty} \text{gap}_L(\B,\tilde{\B}) \le  \text{gap}_{\ell_2}(P,\tilde{P}) .
\eeq 
\end{theorem}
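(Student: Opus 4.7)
The plan is to bound the $L$-gap by the $\ell_2$-gap in the limit via a lift-and-truncate argument: a trajectory in $\B|_L$ is viewed as the truncation of an $\ell_2$-trajectory in $\B$, the $\ell_2$-gap supplies a nearby $\ell_2$-trajectory in $\tilde{\B}$, and the result is truncated back to $\tilde{\B}|_L$.

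As a preliminary reduction, note that $\text{gap}_L(\B,\tilde{\B}) = \max\{\overset{\rightharpoonup}{\text{gap}}(\B|_L,\tilde{\B}|_L),\overset{\rightharpoonup}{\text{gap}}(\tilde{\B}|_L,\B|_L)\}$ and similarly for the $\ell_2$-gap, so it suffices to show $\limsup_{L\to\infty} \overset{\rightharpoonup}{\text{gap}}(\B|_L, \tilde{\B}|_L) \le \gamma := \overset{\rightharpoonup}{\text{gap}}_{\ell_2}(\B,\tilde{\B})$; the reverse direction is symmetric. Using~\eqref{eq:directed_gap}, I would take a unit vector $v = (u,y) \in \B|_L$ and estimate $\delta(v, \tilde{\B}|_L)$.

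The central step is the lift. Let $\pi_L : \ell_2^q \to \R^{qL}$ denote truncation onto $[1,L]$. Exploiting causality of the $\ell_2$-operator $P$, I would lift $v = (u,y)$ to $w = (\hat{u}, P\hat{u}) \in \B$, where $\hat{u} \in \ell_2^m$ is the zero-extension of $u$ beyond time $L$. Causality gives $\pi_L w = v$, and in particular $\B|_L = \pi_L \B$. The $\ell_2$-gap inequality yields $\tilde{w} := P_{\tilde{\B}} w \in \tilde{\B}$ with $\|w - \tilde{w}\|_{\ell_2} \le \gamma \|w\|_{\ell_2}$, and $\tilde{v} := \pi_L \tilde{w} \in \tilde{\B}|_L$ then obeys
\[
\|v - \tilde{v}\|_2 = \|\pi_L (w - \tilde{w})\|_2 \le \|w - \tilde{w}\|_{\ell_2} \le \gamma \|w\|_{\ell_2}.
\]

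The main obstacle is that $\|w\|_{\ell_2}$ can exceed $\|v\|_2$: by orthogonality, $\|w\|_{\ell_2}^2 = \|v\|_2^2 + \|(I-\pi_L) P \hat{u}\|_{\ell_2}^2$, and the tail of the output response need not vanish for every individual unit $v$ (e.g., when the input concentrates near time $L$). The resolution is that the inequality above is loose precisely for such ``bad'' trajectories, while the supremum defining $\overset{\rightharpoonup}{\text{gap}}(\B|_L, \tilde{\B}|_L)$ is realized by trajectories whose natural $\ell_2$-lifts have norm close to $\|v\|_2$. To formalize this, I would replace $w$ by the unique minimum $\ell_2$-norm extension $w^\star$ of $v$ (well-defined by closedness of $\B$ in $\ell_2^q$) and exploit the Pythagorean identity
\[
\|w^\star - \tilde{w}\|_{\ell_2}^2 = \|v - \tilde{v}\|_2^2 + \|(I-\pi_L)(w^\star - \tilde{w})\|_{\ell_2}^2,
\]
so that the truncation-loss term on the right compensates for the extension slack $\|w^\star\|_{\ell_2}^2 - \|v\|_2^2$ as $L \to \infty$. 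The delicate point is establishing this compensation uniformly in $v$, which invokes the $\ell_2$-boundedness of $P$ and $\tilde{P}$. Combining these ingredients yields $\limsup_{L\to\infty} \overset{\rightharpoonup}{\text{gap}}(\B|_L, \tilde{\B}|_L) \le \gamma$, and the symmetric argument completes the proof.
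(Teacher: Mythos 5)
Your overall plan---reduce to the directed gap, pass between $\B|_L$ and $\B$ by truncation and extension, and invoke the $\ell_2$-gap---is the right family of ideas, and you have correctly located the crux: lifting $v\in\B|_L$ to $w^\star\in\B$ inflates the norm. But the proposal does not close this gap, and the compensation you sketch cannot work as stated. From your Pythagorean identity, bounding $\norm{v-\tilde v}_2$ by $\gamma\norm{v}_2$ would require $\norm{(I-\pi_L)(w^\star-\tilde w)}_{\ell_2}\ge\gamma\,\norm{(I-\pi_L)w^\star}_{\ell_2}$, and nothing forces this: $\tilde w=P_{\tilde\B}w^\star$ may agree with $w^\star$ on the tail while the projection error concentrates on $[1,L]$. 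Worse, the extension slack does not vanish uniformly in $v$ as $L\to\infty$: take $v$ whose input is a unit impulse at the last time in the window. Every $\ell_2$-extension of $v$ inside $\B$ must carry the response beyond time $L$, and by time invariance the minimum achievable tail norm is a constant independent of $L$, strictly positive for any non-static system; hence $\sup_v\norm{w^\star(v)}_{\ell_2}/\norm{v}_2$ stays bounded away from $1$. Your argument therefore only yields $\text{gap}_L(\B,\tilde\B)\le\gamma\cdot\sup_v\norm{w^\star(v)}_{\ell_2}/\norm{v}_2$, and the assertion that the supremum defining the directed $L$-gap is ``realized by good trajectories'' is precisely the missing lemma rather than a justification of it. A secondary issue is that causality of $P$ is used to construct the lift but is not among the hypotheses of the theorem.

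The paper's proof sidesteps this obstacle by never lifting $v$ into $\B$. It identifies $\B|_L$ with $\Pi_L(\B)$, the zero-padded truncations of full trajectories (Lemma~\ref{lemma:inclusion_truncation}), so each unit $v\in\B|_L$ becomes a unit vector of $\ell_2^q$ with no norm inflation. It then bounds $\inf_{\tilde w\in\tilde\B|_L}\norm{v-\tilde w}_2$ by $\inf_{\tilde w\in\tilde\B}\norm{\iota_L(v)-\tilde w}_{\ell_2}$, using that truncating a competitor $\tilde w\in\tilde\B$ can only decrease its distance to a vector supported on $[1,L]$. Finally, the limit in $L$ is handled globally by a density argument (Lemmas~\ref{lemma:lim_sup_inf} and~\ref{lemma:closure}): the supremum of the continuous function $w\mapsto\delta(w,\tilde\B)$ over the unit-norm zero-padded truncations converges to its supremum over the unit sphere of $\B$, which is $\overset{\rightharpoonup}{\text{gap}}_{\ell_2}(P,\tilde P)$. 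If you wish to salvage your route, you would have to replace the per-trajectory extension-and-compensation step by a global continuity/density argument of exactly this kind.
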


\begin{remark} 
The problem of uncertainty quantification has a long history in robust control~\cite{zames1981uncertainty,partington2004linear,zhou1996robust,vinnicombe2001uncertainty},
where systems are classically defined over an infinite time horizon.  The comparison between the $\ell_2$-gap and the $L$-gap thus requires ${L\to\infty}$. 
We observe that a   ``data-driven gap metric'' is defined and linked to the $H_2(\mathbb{D}^c)$-gap metric~\cite[Lemma 6]{koenings2017data}. Theorem~\ref{thm:gap_connection} provides a representation free version of the argument given in~\cite[Lemma 6]{koenings2017data}. 
The connection between the $L$-gap metric and the $H_2(\mathbb{D}^c)$-gap metric is left as a question for future research. 
\end{remark}

\section{Application: Mode Recognition and Control} \label{sec:example}

We envision many applications of the $L$-gap metric, e.g., prediction error quantification,  robustification in data-driven control,  and fault detection and isolation.   In the following case study,  we use it as an analysis tool in the spirit of mode recognition and control.  Namely, we determine the mode of a switched autoregressive exogenous (SARX) system directly from data for the purpose of data-driven control. 

Consider a SARX system~\cite{du2018robust} with 2 modes given by
\begin{equation}
\label{eq:SARX}
\begin{aligned}
&y_t = 0.2y_{t-1} + 0.24 y_{t-2} + 2u_{t-1} + n_t ,   \\
&y_t = 0.7y_{t-1} -0.12 y_{t-2} + 1u_{t-1} + n_t ,
\end{aligned}
\end{equation}
where $u_t\in\R$ and $y_t\in\R$ are the inputs and outputs at time $t\in\Zge$, and $n_t\sim\mathcal{N}(0,\sigma^2)$ is observation noise with $\sigma = 10^{-4}$ and truncated to the interval $[-3\sigma,3\sigma]$. 

We consider the problem of performing data-driven control~\cite{coulson2019data},  while recognizing switches in the system's mode. To this end, we use DeePC~\cite{coulson2019data} which solves the following optimal control problem in a receding horizon fashion for some data matrix $D$ serving as a predictive model for allowable trajectories:
\begin{equation}\label{eq:deepc}
\begin{aligned}
\underset{y,u,g}{\textup{minimize}}\quad
&2000\|y-r\|^2+\|u\|^2 + 20\|g\|^2\\
\text{subject to\quad}
&Dg=
\textup{col}(\uini,u,\yini,y),
\end{aligned}
\end{equation}
where $(\uini,\yini)$ is the most recent $\Tini$-length trajectory of the system (used to implicitly fix the initial condition from which the $\Tf$-length prediction, $(u,y)$ evolves), and $r\in\R^{\Tf}$ is a given reference trajectory. We select $\Tini=2$ and $\Tf=5$.

By Lemma~\ref{lemma:fundamental_generalized}, any data matrix $D$ containing sufficiently exciting data from a particular system mode describes (approximately due to noise) the subspace in which trajectories live for that mode. By performing an SVD of $D$, we can identify a large decrease in the singular values indicating the dimension of the subspace of allowable trajectories. Note that SVD does not necessarily preserve structure, but we only require a basis for the restricted behavior. In this case, the subspace dimension is given by~\eqref{eq:generalized_persistency_of_excitation} and is equal to $\Tini+\Tf+n=9$ (see Fig~\ref{fig:singular_values_offline}). Distinguishing the modes would not be possible by looking at the singular values of the data matrices alone. We propose the use of the $L$-gap to distinguish the modes.

\begin{figure}[h]
\includegraphics[width=1\linewidth]{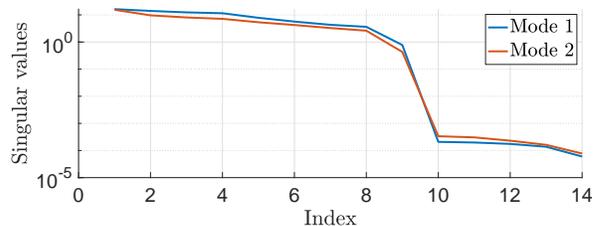}
\caption{Singular values of data matrices representing restricted behaviors of each mode of system~\eqref{eq:SARX}.}
\label{fig:singular_values_offline}
\end{figure}

We propose the following data-driven mode recognition and control strategy. Before starting, let $t\geq 0$ denote the current time, and fix the matrix $D$ in~\eqref{eq:deepc}. The first step is to compute an SVD of $D$ and form a basis $D_{\textup{basis}}$ using the first $\Tini+\Tf+n$ left singular vectors. Next compute an SVD of a matrix with $M$ columns containing the most recent $\Tini+\Tf$-length trajectories, denoted $H_t$, and form a basis, denoted $H_{t,\textup{basis}}$, using the first $\Tini+\Tf+n$ left singular vectors. Fix a threshold $\epsilon>0$. If $\textup{gap}_{\Tini+\Tf}(\Image H_{t,\textup{basis}},\Image D_{\textup{basis}})>\epsilon$, set $D=H_t$. This can be thought of as adopting the most recent data as the predictive model in~\eqref{eq:deepc} only when the gap between the predictive model $D$ and the most recent data $H_t$ is larger than some pre-defined threshold. Equipped with the data matrix $D$, solve~\eqref{eq:deepc} for the optimal predicted input trajectory $(u_1^\star,\dots,u_{\Tf}^\star)$ and apply $u_t = u_1^{\star}$ to the system. Measure $y_t$ and set $(\uini,\yini)$ in~\eqref{eq:deepc} to the most recent $\Tini$-length trajectory of the system. Update $H_t$ by deleting the first column and adding the most recent $\Tini+\Tf$-length trajectory as the last column. This process is repeated in order to perform simultaneous data-driven mode recognition and control.

The strategy above has been simulated with $\epsilon=0.3$ on system~\eqref{eq:SARX} for $t\in[0,70]$. We arbitrarily initialize the predictive model $D$ in~\eqref{eq:deepc} to be a matrix containing sufficiently exciting data from mode 1. However, the system starts in mode 2 and only switches to mode 1 at ${t=40}$. The strategy is compared to data-driven control without mode recognition, i.e., where $D$ is kept constant in~\eqref{eq:deepc}.
The results are shown in Figures~\ref{fig:control_traj} and~\ref{fig:gap}.  We observe in Figure~\ref{fig:control_traj} that the controlled output trajectory is offset from the desired reference. This is due to the fact that we are using the wrong data set in~\eqref{eq:deepc} for predicting optimal trajectories. However, the $L$-gap between $D_{\textup{basis}}$ and $H_{t,\textup{basis}}$ quickly increases above the threshold $\epsilon$, thus successfully recognizing a discrepancy between the current mode of the system and the data being used for control (see Figure~\ref{fig:gap}). During this transient phase, the moving window contains a mixture of data containing trajectories from mode 2, and mode 1. However, at approximately ${t=22}$, the $L$-gap successfully recognizes that the data matrix $D$ used in~\eqref{eq:deepc} is consistent with the current mode of the system (mode 2). The control performance after this transient phase then improves. This is again illustrated during the mode switch at ${t=40}$. On the other hand, the data-driven control strategy with no mode recognition does not adapt to mode switches and has poor performance until ${t=40}$ where the system switches incidentally to mode 1 thus matching with the fixed data matrix $D$ being used in this strategy. This case study suggests that the $L$-gap is a suitable tool for data-driven online mode recognition and control.

\begin{figure}[t!]
\includegraphics[width=1\linewidth]{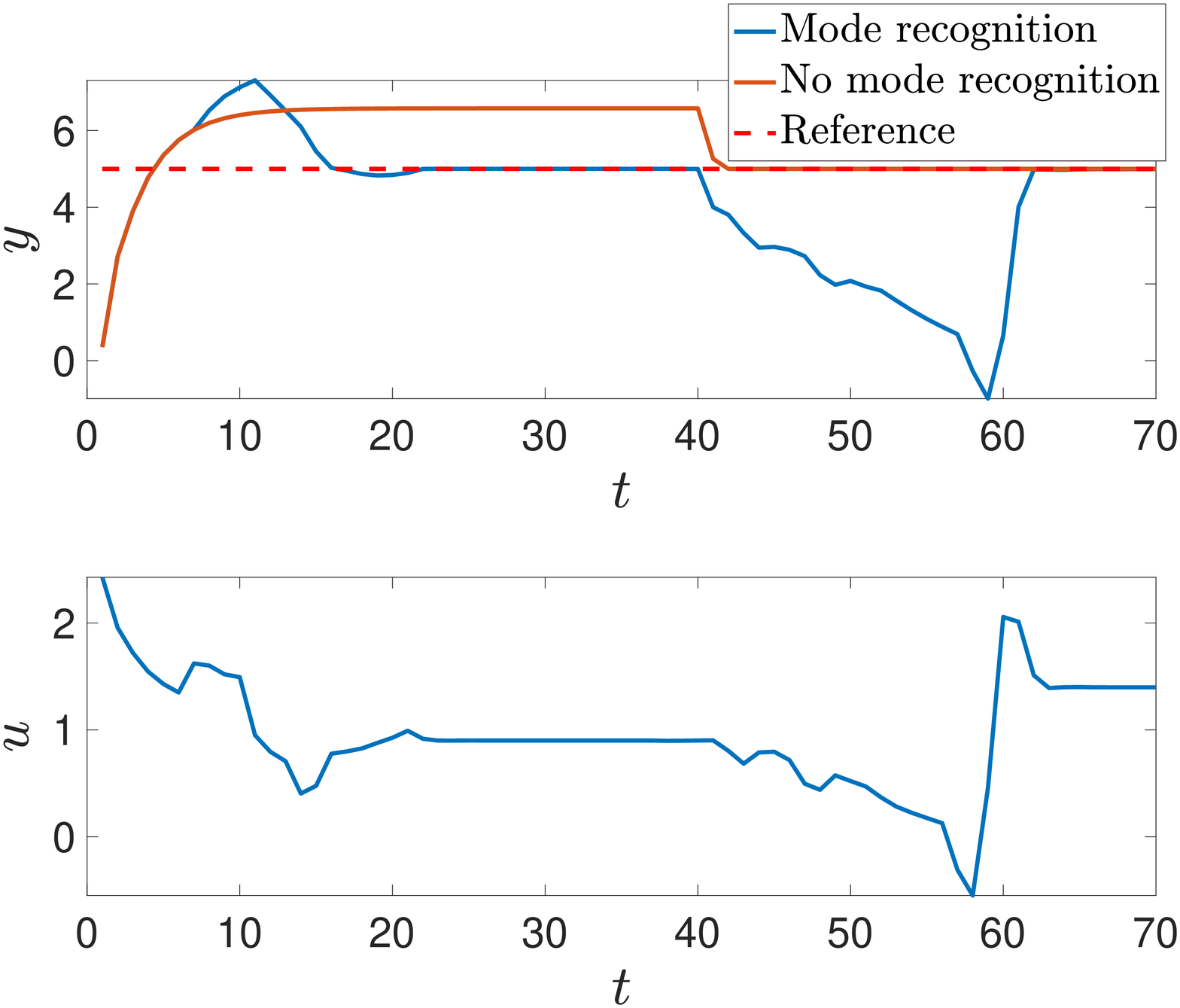}
\caption{Performance of mode recognition and control strategy on the SARX system with switches between modes compared to data-driven control without mode recognition.}
\label{fig:control_traj}
\end{figure}
\begin{figure}[h]
\includegraphics[width=1\linewidth]{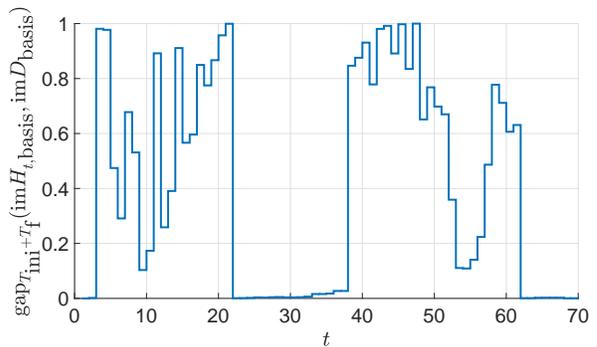}
\caption{Distance computed with the $L$-gap between $H_{t,\textup{basis}}$ and $D_{\textup{basis}}$ for $L=\Tini+\Tf$.}
\label{fig:gap}
\end{figure}

\section{Conclusion} \label{sec:conclusion}

This paper has explored the issue of uncertainty quantification in the behavioral setting. A new metric has been defined on the set of restricted behaviors and shown to capture parametric uncertainty for the class of  AR   models. The metric is a direct finite-time counterpart of the classical gap metric.  
A data-driven control case study has illustrated the value of the new metric through numerical simulations.

\begin{table*}
\centering
\caption{Metrics between subspaces $\mathcal{V}$ and $\mathcal{W}$ in $\Grass{k}{N}$  in terms of the corresponding principal angles 
$\{\theta_i\}_{i \in \mathbf{k}}$ and in terms of matrices $V$ and $W$ whose columns are
orthonormal bases for $\mathcal{V}$ and $\mathcal{W}$,   with ${V}^{\transpose} W = U  S Z^\transpose$ a (full) SVD.}
\begin{tabular}{lll} \hline
Metric     				& Principal angles 										 & Matrix representations  \\[0.3em] \hline
Asimov       			&   $d^{\alpha}(\mathcal{V}, \mathcal{W}) = \theta_k$           
							&   $\sin^{-1} \norm{VV^{\transpose}-WW^{\transpose}}_{2} $                \\
Binet-Cauchy  		&   $d^{\beta}(\mathcal{V}, \mathcal{W})= \left(1-\prod_{i=1}^k (\cos \theta_i)^2 \right)^{1/2}$          
							&   $\left(1- \det(V^{\transpose}W)^2\right)^{1/2}$    \\
Chordal      			&   $d^{\kappa}(\mathcal{V}, \mathcal{W})=  \left(\sum_{i=1}^k (\sin \theta_i)^2 \right)^{1/2}$          
							&   $\tfrac{1}{\sqrt 2} \norm{VV^{\transpose} - WW^{\transpose}}_F$                \\
Fubini-Study 		&   $d^{\phi}(\mathcal{V}, \mathcal{W})= \cos^{-1} \left(\sum_{i=1}^k (\cos \theta_i) \right)$          
							&   $\cos^{-1} |\det V^{\transpose}W| $                \\
Grassmann 			&   $d^{\gamma}(\mathcal{V}, \mathcal{W})= \left( \sum_{i=1}^k \theta_i^2 \right)^{1/2}$           
							&   $\norm{\cos^{-1} \Sigma}_{F}$                \\
Martin       			&   $d^{\mu}(\mathcal{V}, \mathcal{W})= \left(\log \left(\prod_{i=1}^k 1/(\cos \theta_i)^{2} \right) \right)^{1/2} $           
							&   $(-2\log\det V^{\transpose}W)^{1/2}$              \\
Procrustes   		&   $d^{\rho}(\mathcal{V}, \mathcal{W})= 2\left( \sum_{i=1}^k (\sin(\theta_i/2))^2 \right)^{1/2}$           
							&   $\norm{VU-WZ}_{F}$                 \\
Projection (gap) &   $d^{\pi}(\mathcal{V}, \mathcal{W})= \sin \theta_k$           
							&    $\norm{VV^{\transpose}-WW^{\transpose}}_{2}$                \\
Spectral     			&   $d^{\sigma}(\mathcal{V}, \mathcal{W})=2 \sin(\theta_k/2) $           
							&   $\norm{VU-WZ}_{2}$                 \\[0.3em] \hline
\end{tabular}
\label{tab:distances}
\end{table*}

The paper has shown that the gap induces a metric space structure on the set of restricted behaviors.  However,  there are many other common metrics defined on Grassmannians~\cite{deza2009encyclopedia}.  Table~\ref{tab:distances} recalls some of these metrics,  as well as formulae to compute them.   
The fact that all distances  in Table~\ref{tab:distances} depend on the principal angles is not a coincidence.  
In fact,  the geometry of the Grassmannian is such that any rotationally invariant metric between $k$-dimensional subspaces in $\R^N$  
(i.e., dependent only on the relative the position of subspaces) is necessarily a function of the principal angles.

\begin{theorem}\, \cite[Theorem\,2]{ye2016schubert} 
Let  ${d}$ be a rotationally invariant\footnote{A metric  ${d}$ on $\Grass{k}{N}$ is \textit{rotationally invariant} if
$$d(Q\cdot \mathcal{V}, Q\cdot \mathcal{W}) =  d(\mathcal{V}, \mathcal{W})$$
for all ${Q \in O(N)}$ and ${\mathcal{V}, \mathcal{W} \in \Grass{k}{N}}$~\cite[p.1179]{ye2016schubert},  where 
the \textit{left action} of the orthogonal group $\Ort{N}$ on $\Grass{k}{N}$ 
is defined as   
${Q\cdot \mathcal{V} = \Image (Q V)}$
for ${Q \in O(N)}$ and ${\mathcal{V} \in \Grass{k}{N}}$. 
} metric on $\Grass{k}{N}$.  
Then $d(\mathcal{V}, \mathcal{W})$ is a function of the principal angles $\{\theta_i\}_{i \in \mathbf{k}}$ 
between the subspaces $\mathcal{V}$ and $\mathcal{W}$.
\end{theorem}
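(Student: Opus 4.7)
The plan is to show that the value $d(\mathcal{V},\mathcal{W})$ depends only on the principal angles by proving a stronger statement: any two pairs of $k$-dimensional subspaces $(\mathcal{V},\mathcal{W})$ and $(\mathcal{V}',\mathcal{W}')$ in $\R^N$ sharing the same principal angles $\{\theta_i\}_{i\in\mathbf{k}}$ are related by an element of $\Ort{N}$, in the sense that there exists ${Q\in\Ort{N}}$ with ${Q\cdot \mathcal{V} = \mathcal{V}'}$ and ${Q\cdot \mathcal{W} = \mathcal{W}'}$. Once this is established, the conclusion follows immediately from the rotational invariance assumption: $d(\mathcal{V},\mathcal{W}) = d(Q\cdot \mathcal{V},Q\cdot \mathcal{W}) = d(\mathcal{V}',\mathcal{W}')$, so $d$ factors through the principal-angle map ${(\mathcal{V},\mathcal{W}) \mapsto (\theta_1,\ldots,\theta_k)}$.

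The main technical ingredient is a canonical form for a pair of subspaces under the diagonal $\Ort{N}$ action, which is precisely what the CS (Cosine-Sine) decomposition provides. First I would pick orthonormal bases $V\in\R^{N\times k}$ of $\mathcal{V}$ and $W\in\R^{N\times k}$ of $\mathcal{W}$, and apply the SVD ${V^{\transpose}W = A\,\Sigma\,B^{\transpose}}$, with $\Sigma = \diag(\cos\theta_1,\ldots,\cos\theta_k)$. Replacing $V$ by $VA$ and $W$ by $WB$ (which does not change the subspaces but only rotates the bases within each subspace), one obtains $V^{\transpose}W = \Sigma$. Completing $V$ to an orthonormal basis of $\R^N$ and applying a further orthogonal change of coordinates in the orthogonal complement $\mathcal{V}^{\perp}$ yields a canonical representative: in suitable orthonormal coordinates $e_1,\ldots,e_N$ of $\R^N$ one may write $\mathcal{V} = \Span(e_1,\ldots,e_k)$ and $\mathcal{W}$ as the span of the vectors $\cos\theta_i\,e_i + \sin\theta_i\,e_{k+i}$ (with the obvious modification when $N<2k$, treating the pairs $\theta_i=0$ separately). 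Crucially, this canonical pair depends only on the multiset $\{\theta_i\}_{i\in\mathbf{k}}$.

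Applying the same construction to $(\mathcal{V}',\mathcal{W}')$ yields the same canonical pair, hence there exist ${Q_1,Q_2\in\Ort{N}}$ with $Q_1\cdot \mathcal{V} = Q_2\cdot \mathcal{V}'$ and $Q_1\cdot \mathcal{W} = Q_2\cdot \mathcal{W}'$. Setting $Q=Q_2^{\transpose}Q_1\in\Ort{N}$ gives $Q\cdot \mathcal{V} = \mathcal{V}'$ and $Q\cdot \mathcal{W} = \mathcal{W}'$, as required. Combining with rotational invariance, one obtains a well-defined map
$$f:\{(\theta_1,\ldots,\theta_k)\in [0,\pi/2]^k\ :\ \theta_1\le\cdots\le\theta_k\}\to\R,\quad f(\theta_1,\ldots,\theta_k)=d(\mathcal{V},\mathcal{W}),$$
independent of the representative pair.

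The main obstacle I anticipate is bookkeeping in the CS decomposition when some principal angles are repeated, coincide with $0$ or $\pi/2$, or when $N<2k$ (so that $\mathcal{V}\cap\mathcal{W}$ or $\mathcal{V}+\mathcal{W}$ is nontrivial in an unexpected way). In these degenerate cases the canonical form has to be stated with some care — certain basis vectors from $\mathcal{V}^{\perp}$ are absent, and the rotation $Q$ in the corresponding block is only determined up to a further element of $\Ort{N-2k}$ (or a similar factor). However, this ambiguity does not affect the argument, since any admissible $Q$ suffices to transport $(\mathcal{V},\mathcal{W})$ to $(\mathcal{V}',\mathcal{W}')$; one only needs existence. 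All the degenerate cases are handled by the standard CS decomposition statement, so the obstacle is purely expository rather than mathematical.
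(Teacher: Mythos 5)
Your argument is correct: the paper itself gives no proof of this statement (it is quoted from the cited reference), and your route --- using the CS decomposition to show that pairs of $k$-dimensional subspaces with identical principal angles lie in the same orbit of the diagonal $\Ort{N}$ action, so that rotational invariance forces $d$ to factor through the principal-angle map --- is precisely the standard argument behind the cited result. The degenerate cases you flag (repeated angles, angles equal to $0$ or $\pi/2$, and $N<2k$) are handled exactly as you describe, since only the existence of some transporting $Q$ is needed, so there is nothing missing.
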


\noindent
Each metric induces a particular geometry,  which comes with its own advantages and disadvantages. 
For example,  the \textit{Grassmann metric} is the geodesic distance on $\Grass{k}{N}$~\cite[Theorem\,2]{ye2016schubert},  viewed as a Riemannian (quotient) manifold with metric ${g_{Z}(V,W) = \trace((Z^{\transpose}Z)^{-1} V^{\transpose} W)}$.  The corresponding geodesics admit an explicit expression~\cite{absil2009optimization},  which 
allows for a number of optimization-based problems to be solved (e.g.,  regression~\cite{hong2014geodesic}).  
This,  in turn,  suggests that the choice of the metric structure on the set of restricted behaviors
is crucial,  raising a number of important questions.   
For instance,  any metric on $\Grass{k}{N}$ that induces a differentiable structure opens up the possibility of directly optimizing over behaviors.  So can one exploit any such structure to improve the performance of data-driven control algorithms (e.g.,  DeePC)? 
In practice,   non-parametric representations of restricted behaviors are typically constructed from noisy measurements.
This can be a serious drawback,  because noisy restricted behaviors may appear to be far apart,  even when close and/or of the same dimension.  
This issue may be elegantly resolved by extending the metrics in Table~\ref{tab:distances} to infinite Grassmannians~\cite{ye2016schubert}.  So can one leverage these results in an online,  real-time,  noisy setting where behaviors are constantly changing?  We leave the exploration of these important questions as future research directions.

\appendix 
\section{Background}

\subsection{The principal angles}  \label{ssec:appendix-angles}

Let ${\mathcal{V} \in \Grass{k}{N}}$ and ${\mathcal{W} \in \Grass{l}{N}}$.     Let ${r = \min(k, l)}$. 
For ${i \in \mathbf{r}}$,  the $i$-th \textit{principal vectors} $(p_i, q_i)$ are defined, recursively, as the solution of the optimization problem
\beq \nn \label{eq:optimization-problem-principal-angles}
\begin{array}{ll}
    \mbox{maximize}      & p^{\transpose}q\\
    \mbox{subject to}    & p \in \mathcal{V},   ~ p^{\transpose}p_1 = \ldots = p^{\transpose}p_{i-1} = 0,  \ \, \norm{p}_2 = 1 ,\\
    									& q \in \mathcal{W},   \ q^{\transpose}q_1 = \ldots = q^{\transpose}q_{i-1} = 0,  \ \, \norm{q}_2 = 1 .
\end{array}
\eeq 
The \textit{principal angles between the subspaces ${\mathcal{V}}$ and ${\mathcal{W} }$} are defined as
${\theta_{i} = \arccos(p_i^{\transpose}q_{i})}$ for ${i \in \mathbf{r}}$. Clearly,  ${0 \le \theta_1 \le \ldots  \le \theta_r \le  \tfrac{\pi}{2}}$.  Principal vectors and principal angles may be easily computed using, e.g., the SVD~\cite{golub2013matrix}.
Let ${V\in\R^{N\times k}}$ and ${W\in\R^{N\times l}}$ be a $k$-frame of ${\mathcal{V} \in \Grass{k}{N}}$ and an $l$-frame of ${\mathcal{W} \in \Grass{l}{N}}$, respectively.   
Let ${{V}^{\transpose} W = U  S Z^\transpose}$
be a (full) SVD of the matrix ${V}^{\transpose} W$, i.e. ,   ${U\in\Ort{k}}$,  ${V\in\Ort{l}}$, ${S = \blockdiag(S_1) \in \R^{k \times l}}$ with $S_1 = \diag(\sigma_1, \ldots, \sigma_r) \in \R^{r \times r}$, where $\sigma_1 \ge \ldots \ge \sigma_r \ge 0$. The principal angles can be computed as
${\theta_{i}  = \arccos(\sigma_i), }$ with ${i \in \mathbf{r}}$.   See~\cite{golub2013matrix} for further detail.

\subsection{Proof of Theorem~\ref{thm:gap_connection}} \label{ssec:proof_thm_2}

The proof of the theorem requires some preliminary results.  We first establish a one-to-one correspondence between $\R^{qL}$ and the subspace of $\R^q$-valued sequences with finitely many non-zero elements,  as well as additional elementary results (omitting the proof of those that may be established by direct computation).

Let $(\R^q)^{\infty}$ be the space of $\R^q$-valued sequences with finitely many non-zero elements,  i.e.,
$(\R^q)^{\infty} =
\{
w \in (\R^q)^{\Zge} \, : \,   \exists \, T\in \Zge \text{ s.t. }  {w_t = 0} , \, \forall \, t \ge T  
 \}. $
For ${L\in\N}$,   let ${\iota_L : \R^{qL} \to (\mathbb{R}^q)^{\infty}}$ be the \textit{inclusion map},  defined as
$\iota_L(w)=\left(w_{1},\ldots ,w_{L},0,0,\ldots \right).$
Then we have
$(\mathbb{R}^q)^{\infty} = \bigcup_{L=0}^{\infty}  \iota_L (\R^{qL}) .$
Thus,  each image $\iota_L (\R^{qL}) $ may be identified with $\mathbb{R}^{qL}$ by identifying each $ \left(w_{1},\ldots ,w_{L},0,0,\ldots \right) \in (\mathbb{R}^q)^{\infty}$ with ${w=\text{col}\left(w_{1},\ldots ,w_{L} \right) \in \mathbb{R}^{qL}}$.  
Furthermore,  the subspace topology on $\iota_L (\R^{qL}) $,  the quotient topology induced by the map $\iota_L $,  and the Euclidean topology on $\R^{qL}$,  all coincide.

\begin{lemma} \label{lemma:norm} 
Let ${w \in \R^{qL}}$. Then ${\iota_L(w) \in \ell_2^q}$ and 
${\norm{w}_2 = \norm{\iota_L(w)}_{\ell_2}}.$
\end{lemma}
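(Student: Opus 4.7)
The statement is essentially a bookkeeping claim: the inclusion map $\iota_L$ embeds a finite vector into the space of sequences with only finitely many nonzero entries, and the natural $\ell_2$ norm of that sequence coincides with the Euclidean norm of the original vector. My plan is to prove it by direct computation from the definitions, with no clever machinery needed.

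First, I would unpack the definition of $\iota_L$. Given $w = \col(w_1,\dots,w_L) \in \R^{qL}$, the image $\iota_L(w)$ is the sequence whose first $L$ entries are $w_1,\dots,w_L$ and whose remaining entries are the zero vector in $\R^q$. Since all but finitely many entries vanish, the series defining the $\ell_2$-norm has only finitely many nonzero summands; in particular its value is finite, so membership $\iota_L(w) \in \ell_2^q$ follows for free once the norm identity is established.

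Next, I would compute the $\ell_2$-norm directly. Using the definition $\norm{u}_{\ell_2}^2 = \sum_{t=0}^{\infty} \norm{u_t}_2^2$ recalled in the footnote, applied to $u = \iota_L(w)$, the tail of the sum (from index $L+1$ onward) is zero, so
\begin{equation*}
\norm{\iota_L(w)}_{\ell_2}^2 \;=\; \sum_{t=1}^{L} \norm{w_t}_2^2 \;=\; \norm{w}_2^2,
\end{equation*}
where the last equality is the definition of the Euclidean $2$-norm on $\R^{qL}$ via its block decomposition. Taking square roots yields $\norm{w}_2 = \norm{\iota_L(w)}_{\ell_2}$, which simultaneously shows $\iota_L(w) \in \ell_2^q$ and establishes the claimed identity.

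There is no real obstacle here; the only minor care concerns the indexing convention (the $\ell_2$ sum in the footnote starts at $t=0$, whereas $\iota_L$ is written with indices $1,\dots,L$), but this is a harmless relabelling since the contributing indices are exactly those where $\iota_L(w)$ is nonzero. The lemma is best viewed as a technical preliminary whose purpose is to let one freely identify finite trajectories in $\R^{qL}$ with elements of $\ell_2^q$ in the forthcoming proof of Theorem~\ref{thm:gap_connection}.
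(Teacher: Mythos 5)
Your proof is correct and is precisely the direct computation the paper has in mind: the authors explicitly omit the proof of this lemma as one "that may be established by direct computation," and your argument (finitely many nonzero terms, so the $\ell_2$ sum reduces to the finite Euclidean sum) is that computation. Your remark on the indexing convention is a harmless and reasonable point of care.
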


\begin{lemma}  \label{lemma:lim_sup_inf} 
Let $\mathcal{S}$ be a subset of a topological space and let $f:\mathcal{S}\to \R$ be a continuous function. 
Assume ${\mathcal{S}_0 \subseteq \mathcal{S}_1  \subseteq \ldots }$ is a sequence of subsets of $\mathcal{S}$ such that 
${\mathcal{S} \subseteq \closure\left( \bigcup_{L=0}^{\infty} \mathcal{S}_L  \right) }$
then
$\sup_{\mathcal{S}} f  = \lim_{L \to \infty } \sup_{\mathcal{S}_L} f  ,$
whenever the limit exists.
\end{lemma}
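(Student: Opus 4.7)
The plan is to establish the equality by proving two inequalities.

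The easy direction is the upper bound on the limit. Since $\mathcal{S}_L \subseteq \mathcal{S}$ for every $L$, we have $\sup_{\mathcal{S}_L} f \le \sup_{\mathcal{S}} f$, and taking $L\to\infty$ yields $\lim_{L\to\infty} \sup_{\mathcal{S}_L} f \le \sup_{\mathcal{S}} f$. (Note that since the sequence of sets is nested, the scalar sequence $\sup_{\mathcal{S}_L} f$ is nondecreasing in $L$, so its limit exists in $[-\infty,+\infty]$; the hypothesis on existence of the limit is what rules out the value $+\infty$ in the subsequent manipulations.)

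For the reverse inequality I would pick an arbitrary $s \in \mathcal{S}$ and argue $f(s) \le \lim_{L\to\infty} \sup_{\mathcal{S}_L} f$. By the hypothesis $\mathcal{S} \subseteq \closure(\bigcup_{L=0}^{\infty} \mathcal{S}_L)$, the point $s$ lies in the closure of the union, so there exists a net (a sequence, in the first-countable case relevant to the paper's application) $(s_\alpha)$ with $s_\alpha \in \bigcup_{L=0}^{\infty} \mathcal{S}_L$ and $s_\alpha \to s$. By continuity of $f$, $f(s_\alpha) \to f(s)$. For each $\alpha$, there is some index $L_\alpha$ with $s_\alpha \in \mathcal{S}_{L_\alpha}$, and therefore $f(s_\alpha) \le \sup_{\mathcal{S}_{L_\alpha}} f \le \lim_{L\to\infty} \sup_{\mathcal{S}_L} f$, where the last inequality uses that the scalar sequence is nondecreasing so the limit is its supremum. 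Passing to the limit along the net gives $f(s) \le \lim_{L\to\infty} \sup_{\mathcal{S}_L} f$, and taking the supremum over $s \in \mathcal{S}$ concludes $\sup_{\mathcal{S}} f \le \lim_{L\to\infty} \sup_{\mathcal{S}_L} f$.

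Combining the two inequalities yields the claimed identity. The main point requiring care is the step from ``$s$ lies in the closure of the union'' to ``there is a convergent net/sequence into the union''; in a general topological space this requires nets (or filters), whereas in the metric setting where the lemma is actually applied (subspaces of $\R^{qL}$ and $\ell_2^q$) a sequence suffices and the argument becomes elementary. Everything else is just monotonicity of $\sup$ under set inclusion and continuity of $f$.
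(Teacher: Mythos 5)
Your proof is correct, and it rests on the same two ingredients as the paper's: monotonicity of $\sup_{\mathcal{S}_L} f$ under the nested inclusions $\mathcal{S}_L \subseteq \mathcal{S}$ for the easy inequality, and the closure hypothesis combined with continuity of $f$ for the reverse one. The execution of the hard direction differs, though. The paper fixes a maximizing sequence $w_L \in \mathcal{S}$ with $f(w_L) \to \sup_{\mathcal{S}} f$, uses continuity at each $w_L$ to produce an $\epsilon_L$-neighborhood on which $f$ drops by at most $1/L$, and then invokes the closure hypothesis to replace each $w_L$ by a nearby point $\bar{w}_L \in \mathcal{S}_{k_L}$, so that $f(\bar{w}_L) \to \sup_{\mathcal{S}} f$ while $f(\bar{w}_L) \le \sup_{L} \sup_{\mathcal{S}_L} f$. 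You instead argue pointwise: for each fixed $s \in \mathcal{S}$ you approximate $s$ by a net in $\bigcup_{L} \mathcal{S}_L$, bound $f(s_\alpha) \le \lim_{L\to\infty} \sup_{\mathcal{S}_L} f$ term by term, and pass to the limit before taking the supremum over $s$. Your route buys two things: it avoids the maximizing-sequence and $1/L$ bookkeeping, and, by using nets, it is valid verbatim in an arbitrary topological space, whereas the paper's appeal to $\epsilon_L$-neighborhoods tacitly assumes a metric (or at least first-countable) structure --- harmless for the intended application to subsets of $\ell_2^q$, but strictly a mismatch with the lemma's stated generality, a point you explicitly flag. Both arguments treat the ``whenever the limit exists'' proviso identically, via monotonicity identifying the limit with $\sup_{L} \sup_{\mathcal{S}_L} f$.
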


\begin{proof}
The sequence $ \sup_{\mathcal{S}_L} f $ is non-decreasing,  so 
$\lim_{L \to \infty } \sup_{\mathcal{S}_L} f  = \sup_{L \in \Zge} \sup_{\mathcal{S}_L} f . $
Furthermore, by assumption,  ${\mathcal{S}_L \subseteq \mathcal{S}}$ for every ${L \in \Zge}$,  so 
$\sup_{\mathcal{S}} f  \ge  \sup_{L \in \Zge} \sup_{\mathcal{S}_L} f .$
To prove the reverse inequality,  consider a sequence ${w_L} \in \mathcal{S}$ such that ${f(w_L) \to \sup_{\mathcal{S}} f}$.
Then,  for every ${L\in \Zge}$ there is ${\epsilon_L>0}$ such that ${f(w) > f(w_L) - 1/L}$ for every ${w\in\mathcal{N}_{\epsilon_L}(w_L)}$, with $\mathcal{N}_{\epsilon_L}(w_L)$ an  $\epsilon_L$-neighborhood of $w_L$.  By assumption,  ${\mathcal{S} \subseteq \closure\left( \bigcup_{L=1}^{\infty} \mathcal{S}_L  \right)}$,  so every $\mathcal{N}_{\epsilon_L}(w_L)$ contains a point $\bar{w}_L \in \mathcal{S}_{k_L}$ for some $\mathcal{S}_{k_L}$.  Thus,  $f(\bar{w}_L) \to  \sup_{\mathcal{S}} f$.  This,  in turn,  implies
$\sup_{\mathcal{S}} f  \le  \sup_{L \in \Zge} \sup_{\mathcal{S}_L} f .$
\end{proof}

Given ${L \in \Zge}$,  let ${\Pi_L: (\R^q)^{} \to (\R^q)^{\Zge}}$ be the \textit{truncation operator},  defined as~\cite[p.13]{willems1971analysis}
\beq \nn
\Pi_L(w) = 
\begin{cases}
w_t,  & \text{ for } t \in [0,L-1] \cap \Zge,  \\
0, & \text{ otherwise},
\end{cases}
\eeq
with the convention that ${\Pi_0(w) = w}$ for all ${w\in(\R^q)^{\Zge}}$.

\begin{lemma} \label{lemma:inclusion_truncation}
Let $\B \in \L^{q}$.  Then  ${\iota_L(\B|_L) = \Pi_L(\B)}. $
\end{lemma}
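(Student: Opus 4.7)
The statement is essentially a bookkeeping identity between two equivalent ways of embedding a finite trajectory of $\B$ into the space of eventually-zero $\R^q$-valued sequences. The plan is a direct double-inclusion argument following the definitions, with the only care being the matching of the time indexing used in $\B|_L$ (indices $1,\ldots,L$) with that used by $\Pi_L$.

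For the inclusion $\iota_L(\B|_L) \subseteq \Pi_L(\B)$, I would start from an arbitrary $\tilde w \in \iota_L(\B|_L)$, so that $\tilde w = \iota_L(w)$ for some $w = \col(w_1,\ldots,w_L) \in \B|_L$. By the definition of the restricted behavior, there exists $v \in \B$ with $w_t = v_t$ for all $t \in \mathbf{L}$. Unfolding $\iota_L(w) = (w_1,\ldots,w_L,0,0,\ldots)$ and comparing to $\Pi_L(v) = (v_1,\ldots,v_L,0,0,\ldots)$, the two sequences coincide termwise, so $\tilde w = \Pi_L(v) \in \Pi_L(\B)$.

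For the reverse inclusion $\Pi_L(\B) \subseteq \iota_L(\B|_L)$, I would take $\tilde w \in \Pi_L(\B)$, so $\tilde w = \Pi_L(v)$ for some $v \in \B$. Setting $w := \col(v_1,\ldots,v_L) \in \R^{qL}$, the very definition of $\B|_L$ yields $w \in \B|_L$, and by construction $\iota_L(w) = (v_1,\ldots,v_L,0,0,\ldots) = \Pi_L(v) = \tilde w$. Hence $\tilde w \in \iota_L(\B|_L)$.

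There is no real obstacle here; the only non-trivial step is being consistent with whether the time axis starts at $0$ or $1$, since the paper uses both conventions (the restricted behavior is defined on $\mathbf{L}=\{1,\ldots,L\}$ while the truncation operator $\Pi_L$ is written with $t \in [0,L-1]\cap\Zge$). The identification is purely notational: under either convention, $\iota_L$ and $\Pi_L$ produce the same finite initial segment padded by zeros, which is exactly what the two inclusions above verify.
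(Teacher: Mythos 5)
Your proof is correct and is exactly the routine double-inclusion / unfolding-of-definitions argument that the paper itself omits, stating only that the lemma "may be established by direct computation." Your remark about reconciling the $t\in\mathbf{L}$ versus $t\in[0,L-1]$ indexing conventions is a fair observation about the paper's notation and does not affect the validity of the argument.
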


\begin{lemma} \label{lemma:closure}
Let $\B \in  \L^{q} \cap \ell_2^q$.  Then
${\B = \closure\left( \bigcup_{L=0}^{\infty} \Pi_L (\B)  \right) }.$
\end{lemma}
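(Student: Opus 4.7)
My plan is to establish the equality by proving the two set-inclusions separately, with closure taken in the $\ell_2^q$ topology (consistent with Lemma~\ref{lemma:norm} and the way Lemma~\ref{lemma:closure} is used in the gap-comparison proof).

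First, to obtain $\B \subseteq \closure\bigl(\bigcup_{L} \Pi_L(\B)\bigr)$, I would run the standard density argument for square-summable sequences: for any $w \in \B$, the hypothesis $\B \subseteq \ell_2^q$ gives $w \in \ell_2^q$, hence
\[
\norm{w - \Pi_L(w)}_{\ell_2}^2 \;=\; \sum_{t \geq L}\norm{w_t}_2^2 \;\longrightarrow\; 0
\]
as $L \to \infty$. Since $\Pi_L(w) \in \Pi_L(\B)$ by construction (taking $w$ itself as the trajectory being truncated), this exhibits $w$ as an $\ell_2^q$-limit of elements from $\bigcup_L \Pi_L(\B)$, placing $w$ in the closure.

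For the reverse inclusion $\closure\bigl(\bigcup_{L} \Pi_L(\B)\bigr) \subseteq \B$, I would take any $v$ in the closure and select a sequence $v^{(n)} = \Pi_{L_n}(w^{(n)})$ with $w^{(n)} \in \B$ and $v^{(n)} \to v$ in $\ell_2^q$. Since $\ell_2^q$-convergence forces coordinatewise pointwise convergence, $v^{(n)}_t \to v_t$ for every $t$. Passing to a subsequence along which $L_n \to \infty$, for each fixed $t$ eventually $v^{(n)}_t = w^{(n)}_t$, so $w^{(n)} \to v$ in the topology of pointwise convergence; completeness of $\B$ (closedness in that topology) then forces $v \in \B$.

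The main obstacle is the remaining case in which $L_n$ admits no diverging subsequence, so that $v$ has finite temporal support. I would address this by leveraging the linear, shift-invariant structure of $\B$ together with $\B \subseteq \ell_2^q$: such a finite-support $v$ can itself be approximated in $\ell_2^q$ by genuine trajectories of $\B$, obtained by extending the first coordinates of $v$ along the system dynamics (whose tails decay by $\ell_2$-membership), thereby reducing this case back to the diverging-$L_n$ situation already handled. This reduction step, which tightly couples the algebraic structure of the behavior with its $\ell_2$-embedding, is the most delicate part of the argument.
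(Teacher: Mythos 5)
Your forward inclusion $\B \subseteq \closure\bigl(\bigcup_{L} \Pi_L(\B)\bigr)$ is correct and is essentially the paper's own argument: the paper takes the same sequence $\bar{w}_L = \Pi_L(w)$ and uses $w \in \ell_2^q$ to conclude $\bar{w}_L \to w$. Your treatment of the reverse inclusion along a subsequence with $L_n \to \infty$ is also sound (coordinatewise convergence plus completeness of $\B$), and it is more explicit than the paper's route, which instead derives the reverse inclusion from the set identity $\B \cap (\R^q)^{\infty} = \bigcup_{L} \Pi_L(\B)$ followed by closure manipulations.

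The genuine gap is exactly the case you flagged, and your proposed patch cannot close it. The obstruction is that a truncation $\Pi_L(w)$ of a trajectory $w \in \B$ is in general \emph{not} in $\B$, nor even in its $\ell_2$-closure, so points of $\bigcup_L \Pi_L(\B)$ reached with bounded $L_n$ need not be approximable by trajectories of $\B$ at all. Concretely, take $q=1$ and the autonomous behavior $\B = \{w : w_{t+1} = \tfrac{1}{2} w_t\} \subseteq \ell_2^1$, which is linear, time-invariant, and complete. Then $\Pi_1(\B) = \{(c,0,0,\dots) : c \in \R\}$, and the $\ell_2$-distance from $(1,0,0,\dots)$ to $\B$ is $\min_c \bigl((1-c)^2 + c^2/3\bigr)^{1/2} = 1/2 > 0$; a similar computation (using boundedness of $P^{-1}$) defeats the ``extend along the dynamics'' idea even for graphs of bounded causal operators. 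Hence the inclusion $\closure\bigl(\bigcup_L \Pi_L(\B)\bigr) \subseteq \B$, and with it the lemma as stated, fails without further hypotheses, so no reduction of the bounded-$L_n$ case to the diverging case is possible. Note that the paper's proof of this inclusion rests on the unproved identity $\B \cap (\R^q)^{\infty} = \bigcup_L \Pi_L(\B)$, whose ``$\supseteq$'' half is precisely the claim $\Pi_L(\B) \subseteq \B$ that fails here; your sequential argument has the merit of exposing this weak point rather than concealing it. A repair would either replace $\Pi_L(\B)$ by the compactly supported trajectories $\B \cap \iota_L(\R^{qL})$ or add an assumption guaranteeing that every truncated trajectory can be completed to a nearby $\ell_2$-trajectory of $\B$.
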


\begin{proof}
($\subseteq$).  Since ${\B \cap  (\R^q)^{\infty} = \bigcup_{L=0}^{\infty} \Pi_L (\B) }$,
we have
\begin{align*}
\closure\left( \bigcup_{L=0}^{\infty} \Pi_L (\B)  \right)  
&= \closure\left( \B \cap  (\R^q)^{\infty} \right)  
\subseteq \closure(\B) \cap  \closure\left( (\R^q)^{\infty} \right) \\
&= \closure(\B) \cap  (\R^q)^{\Zge}
= \closure(\B) 
= \B,
\end{align*}
where we have used ${(\R^q)^{\Zge} = \closure\left( (\R^q)^{\infty} \right)}$~\cite[p151]{munkres2000topology} and
the completeness of $\B$.

($\supseteq$).  Let ${w\in\B}$ and recall that ${w \in \closure\left( \bigcup_{L=0}^{\infty} \Pi_L (\B)  \right)}$ if and only if 
${w}$ is the limit of some sequence of points in ${\bigcup_{L=0}^{\infty} \Pi_L (\B)}$.  Consider the sequence 
$\bar{w}_L = \Pi_L(w)$ for $L \in \Zge$.
Clearly,  ${\bar{w}_L \in {\bigcup_{L=0}^{\infty} \Pi_L (\B)}}$.  Furthermore,   since ${\B \in  \L^{q} \cap \ell_2^q}$,  ${\bar{w}_L \to w }$,  as desired. 
\end{proof}

\begin{proof}[Proof of Theorem~\ref{thm:gap_connection}]
By assumption,  $P$ and $\tilde{P}$ are bounded linear operators,  so $\text{gap}_{\ell_2}(P,\tilde{P})$ is well-defined.
Then
\begin{align}
\lim_{L \to \infty} \overset{\rightharpoonup}{\text{gap}}_L(\B,\tilde{\B}) 
& \stackrel{\eqref{eq:gapL_def}}{=} \lim_{L \to \infty} \overset{\rightharpoonup}{\text{gap}}(\B|_L,\tilde{\B}|_L)  \nn \\
& \stackrel{\eqref{eq:directed_gap}}{=}   \lim_{L \to \infty} \sup_{w \in \B|_{L} \atop \norm{w}_2 = 1} \inf_{\tilde{w} \in  \tilde{\B}|_{L}} \norm{w-\tilde{w}}_{2}  \nn  \\ 
& = \lim_{L \to \infty} \sup_{w \in \iota_L(\B|_{L}) \atop \norm{w}_{\ell_2} = 1} \inf_{\tilde{w} \in  \iota_L(\tilde{\B}|_{L})  } \norm{w-\tilde{w}}_{\ell_2}  \nn  \\ 
&  = \lim_{L \to \infty} \sup_{w \in \Pi_L(\B)) \atop \norm{w}_{\ell_2} = 1} \inf_{\tilde{w} \in \Pi_L(\tilde{\B})  } \norm{w-\tilde{w}}_{\ell_2}   \nn  \\  
&  \le  \lim_{L \to \infty} \sup_{w \in \Pi_L(\B)) \atop \norm{w}_{\ell_2} = 1} \inf_{\tilde{w} \in \tilde{\B}  } \norm{w-\tilde{w}}_{\ell_2}  \nn  \\  
&  =  \sup_{w \in \B \atop \norm{w}_{\ell_2} = 1} \inf_{\tilde{w} \in  \tilde{\B}}  \norm{w-\tilde{w}}_{\ell_2}    \nn  \\
&  = \overset{\rightharpoonup}{\text{gap}}_{\ell_2}(P,\tilde{P})  , \label{eq:proof_thm_connection} 
\end{align} 
where 
the third identity is a consequence of Lemma~\ref{lemma:norm},
the fourth identity follows 
from Lemma~\ref{lemma:inclusion_truncation}, 
the fifth inequality is implied by ${ \inf_{\tilde{w} \in \Pi_L(\tilde{\B})  } \norm{w-\tilde{w}}_{\ell_2}  \le  \inf_{\tilde{w} \in \tilde{\B}  } \norm{w-\tilde{w}}_{\ell_2}}$ for ${w \in \Pi_L(\B)}$,  the sixth  equality is a consequence of Lemma~\ref{lemma:lim_sup_inf},   and the last identity holds by Definition~\ref{def:gap_l2},  since ${\B = \text{graph}_{\ell_2} (P)}$ and ${\B = \text{graph}_{\ell_2} (\tilde{P})}$,  by assumption. 
Finally,  the function $(x,y) \mapsto \max(x,y)$ is continuous,   thus  
\begin{align*}
\lim_{L \to \infty} {\text{gap}}_L(\B,\tilde{\B}) 
&=
\lim_{L \to \infty} 
\max\{ 
\overset{\rightharpoonup}{\text{gap}}_L(\B,\tilde{\B}),
\overset{\rightharpoonup}{\text{gap}}_L(\tilde{\B},\B) \}  \\ 
&  \stackrel{\eqref{eq:proof_thm_connection}}{\le} 
\max\{ 
\overset{\rightharpoonup}{\text{gap}}_{\ell_2}(P,\tilde{P}) ,
\overset{\rightharpoonup}{\text{gap}}_{\ell_2}(\tilde{P},P) \}  \\  
&= {\text{gap}}_{\ell_2}(\tilde{P},P) .
\end{align*}
\end{proof}

\bibliographystyle{IEEEtran}
\bibliography{refs}

\end{document}